\newcommand{\authorfootnotes}{\renewcommand\thefootnote{\@fnsymbol\c@footnote}}%
\def\'#1{{\if #1i{\accent"13\i}\else {\accent"13 #1}\fi}}
\def\R{\mathbb{R}}
\def\e{\varepsilon}
\newtheorem{theo}{Theorem}[section]
\newtheorem{lemme}[theo]{Lemma}
\newtheorem{defi}[theo]{Definition}
\newtheorem{prop}[theo]{Proposition}
\newtheorem{remark}[theo]{Remark}
\numberwithin{equation}{section}
\definecolor{orange}{rgb}{1.00,0.50,0.0}
\definecolor{violet}{rgb}{0.46,0.39,0.87}
\definecolor{green}{rgb}{0.2,0.4,0.6}
\DeclareMathAlphabet{\mathonebb}{U}{bbold}{m}{n}
\begin{document}

\title[Homeoprotein Diffusion and Boundary Stabilization]{Competition and boundary formation in heterogeneous media: Application to neuronal differentiation}
\maketitle

\author{
\begin{center}
 Benoit Perthame\footnote{UPMC Univ Paris 06 and CNRS UMR 7598, Laboratoire Jacques-Louis Lions, F-75005, Paris, France. Email B. P.:
benoit.perthame@ljll.math.upmc.fr}\qquad\qquad Cristobal Qui\~ninao$^{1,}$\footnote{Mathematical Neuroscience Team, Center for Interdisciplinary Research in Biology (CIRB), F-75005 Paris, France. Email C.Q.: cristobal.quininao@college-de-france.fr, Email J.T.: jonathan.touboul@college-de-france.fr}\qquad\qquad Jonathan Touboul$^{2,}$\footnote{INRIA Mycenae Team, Paris-Rocquencourt Center}
\end{center}
}

\begin{abstract}
We analyze an inhomogeneous system of coupled reaction-diffusion equations representing the dynamics of gene expression during differentiation of nerve cells. The outcome of this developmental phase is the formation of distinct functional areas separated by sharp and smooth boundaries. It proceeds through the competition between the expression of two genes whose expression is driven by monotonic gradients of chemicals, and the products of gene expression undergo local diffusion and drive gene expression in neighboring cells. The problem therefore falls in a more general setting of species in competition within a non-homogeneous medium. We show that in the limit of arbitrarily small diffusion, there exists a unique monotonic stationary solution, which splits the neural tissue into two winner-take-all parts at a precise boundary point: on both sides of the boundary, different neuronal types are present. In order to further characterize the location of this boundary, we use a blow-up of the system and define a traveling wave problem parametrized by the position within the monotonic gradient: the precise boundary location is given by the unique point in space at which the speed of the wave vanishes. 
\end{abstract}

{\bf Key-words:} Morphogen gradients; Reaction-diffusion systems; Traveling waves; Asymptotic analysis; Boundary formation

\noindent {\bf Mathematics Subject Classification:} 35B25; 35B36; 35K57; 82C32; 92C15


\section{Introduction}
In this paper we undertake a rigorous mathematical analysis of the boundary formation in a model of developing tissue. Our motivation can be traced back to the work of Alan Turing in the middle of last century, that lead to his celebrated theory of instabilities~\cite{turing:52}. In his paper, Turing proposed, before substantial knowledge about the development and maturation of living systems was acquired, that the determination of territories was the result of the competition between different chemical substances, he called morphogens, that were reacting together and diffusing, in the presence of a third specie which acts as a catalyst on the expression of both species. In a certain regime of diffusion, these equations yield what we now call \emph{Turing patterns}, that define a partition of the tissue into differentiated areas (expressing one or the other chemical specie), whose exact shape and location are unpredictable and depend on the initial condition. 

In contrast to this indeterminacy of the boundary location in Turing's model, morphogenesis in living systems is an extremely reliable process. Actually, precision of the boundary location is crucial from an evolutionary perspective, in that it ensures proper transmission of essential hereditary patterns. Notwithstanding this qualitative distinction, several years after introduction of Turing's model, biological experiments validated Turing's intuition: transcription factors (called homeoproteins) expressed in cells during development have been shown to have self-activating and reciprocal inhibitor properties as in Turing's theory, but moreover, where shown to have the property to exit the cellular nucleus and membrane and enter the neighboring cells nucleus where it exerts its transcriptional properties~\cite{prochiantz-joliot:03,layalle-etal:11}. However, in contrast to the initial Turing  model, the catalyst chemical specie show a specific spatial organization: it forms one-dimensional monotonic gradients of concentration~\cite{brunet-etal:07}. This arrangement of catalysts along gradients lead to the development of the french flag model (FFM)~\cite{wolpert:69}. This model assumes no diffusion of genetic material, but only all-or-none response to specific thresholds of the catalyst gradient, therefore yielding boundary at a specific location in space. However, this model is relatively sensitive to noise and necessitates to introduce finer mechanisms in order to ensure robustness and accuracy of the boundary location~\cite{meinhardt1978space,gierer:72}.

Combining both phenomena of non cell-autonomous activity (small diffusion of transcription factors, acting as Turing morphogens) and graded expression of a catalyst (FFM-like model) lead to a recently developed minimalistic model of boundary formation~\cite{quininao-prochiantz} reproducing in a parsimonious way both reliability and accuracy of boundary location. This model is given by nonlinear parabolic equations with spatially-dependent coefficients. Simulations indeed showed that in the absence of diffusion, there is no clear separation in two regions, but even very small diffusions disambiguate the differentiation process and lead to a clear definition of the boundary. The object of the paper is to rigorously understand this stabilization in the regime of small diffusions. The mathematical problem we shall be analyzing is actually much more general than the problem of neurodevelopment that motivates the study. Indeed, systems characterized by the competition of two species that are self-activating and reciprocal inhibitor are ubiquitous in life science, and extend to spatially extended population models, large-scale systems of bacterias and social interactions. The particularity of the model we shall analyze, and which may occur in different situations in the cited domains, is precisely the presence of the non-spatially homogeneous catalyst, producing predictable and reproducible patterns. 

Due to the ubiquity of such competing systems in life science, we shall propose here a general model supporting reliable pattern formation, and relevant to biology. To this purpose, we complete this introduction by briefly exposing details on neuronal differentiation, before introducing the model we shall be investigating and summarizing our main mathematical results. 

\subsection{Biological motivation}\label{motivation}

Let us make more precise the model we have in mind in our developments. The central question we shall address the emergence of reliable boundaries in the developing nervous system. The neural tube indeed develops into a complex functional and anatomical architecture endowed with complex connectivity patterns~\cite{raff}. The size and shape of functional areas in the cortex is of primary importance: it conditions acquisition of functions, and disruptions are associated to severe conditions, including neuropsychiatric and cognitive disorders~\cite{uhlhaas2011development,garey2010cortical}. In the beginning of this century, biologists analyzed developmental genes transcription factors, and showed that these are endowed with non cell-autonomous activity (they belong the \emph{homeoprotein} family), thanks to two short peptidic sequences present in their DNA-binding domain~\cite{joliot-prochiantz:04}.  These transcription factors have the capability to exit the nucleus of the cells, leave the intracellular medium and penetrate the nucleus of neighboring cells where they exert they transcriptional activity. This direct signaling was experimentally demonstrated in vivo during development in the zebrafish~\cite{lesaffre-etal:07,wizenmann-etal:09},  or involved in plasticity of adult networks~\cite{beurdeley-etal:12,sugiyama-etal:08,spatazza-etal:13,brunet-etal:07}. The spatial extension and rate of this process are very low: transcription factors can diffuse and reach at most three cell ranks~\cite{layalle2011engrailed}, and since the diffusion is passive, important loss reduce the effective number of transcription factors involved. Notwithstanding, it was shown recently~\cite{quininao-prochiantz} in an elementary model of neurodevelopment that even very low diffusion had major effects on the outcome of the differentiation process. Indeed, in the absence of diffusion, there is an ambiguity in the differentiation in a specific region of the neural tissue, which yield imprecise boundaries that are not reproducible, and sensitively depend on initial condition and possible heterogeneity or noise, but in the presence of small diffusion, the location of the boundary is highly reliable, and the differentiation yields a smooth boundary. 

Understanding this dramatic regularization is precisely the object of the present paper. This problem falls in the frame of the competition of two diffusing species $A$ and $B$ that are reciprocal inhibitor and self-activating, with saturation and spatially heterogeneous production rates $H_A(x,A,B)$ and $H_B(x,A,B)$ (depending on the cell location $x$). In the neurodevelopment problem, transcription factors expressed by two genes $G_A$ and $G_B$ constitute our two competing species, and the space heterogeneity corresponds to the graded concentration of morphogens. For simplicity, we shall restrict here our analysis to a one-dimensional case\footnote{Generalization to higher dimensions in situations where geometry of the space and the spatial variations along gradients are sufficiently simple can be handled in the same manner. In~\cite{quininao-prochiantz}, we propose a two-dimensional extension of this property. } in which the differentiating tissue is along the interval $[0,1]$. A schematic version of the model is plotted is Figure~\ref{fig:Model}.
\begin{figure}[h]
	\centering
		\includegraphics[width=.7\textwidth]{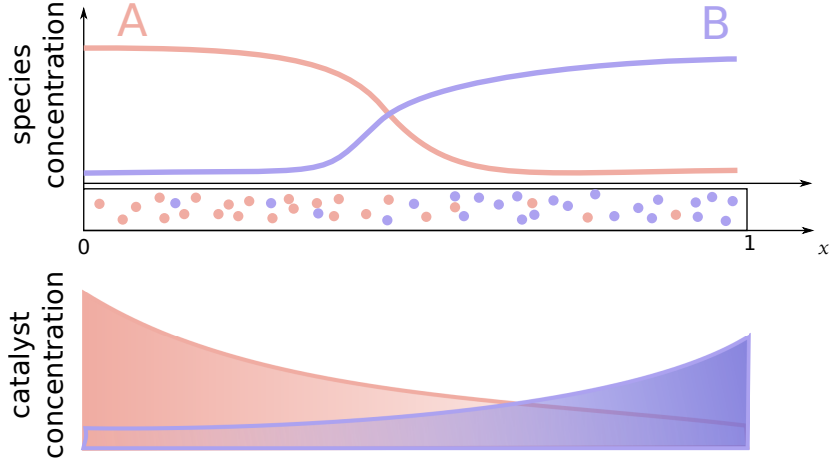}
	\caption{Our model describes the dynamics of two species in competition ($A$, pink and $B$, violet) responding to monotonic resource distributions (bottom line), with reciprocally inhibitory activity and subject to local diffusion.}
	\label{fig:Model}
\end{figure}

\subsection{General model and main result} 
The above description naturally leads to the definition of the following system of reaction-diffusion equations: 
\begin{equation}\label{eq:GeneralElliptic}
	\begin{cases}
	-\varepsilon d_A\Delta A = A \, H_A(x,A,B),  \qquad 0 < x < 1, 	 \\[5pt]
	-\varepsilon d_B\Delta B = B\, H_B(x,A,B),
	\end{cases}
\end{equation}
with Robin type boundary conditions stated below.

Here $H_A$ and $H_B$ are maps from $[0,1]\times \R_+\times \R_+$ on $\R$, assumed to be of class $C^2$. Based on our description of the phenomena, we assume that, for $0 < x < 1$, $A>0,\;B>0$, 
\begin{equation}\label{GH2}
	\begin{cases}
 H_A(x,0,0)>0, \qquad \qquad H_B(x,0,0)>0,  
 \\[5pt]
 \partial_x H_A(x,A,B)<0,\qquad \partial_x H_B(x,A,B)>0, 
 \\[5pt]
\partial_BH_A(x,A,B)<0,\qquad \partial_AH_B(x,A,B)<0,
\end{cases}
\end{equation}
which can be interpreted as follows: on the one hand, the  morphogen gradients do not vanish and vary monotically, on the other hand the system expresses competition between species $A$ and $B$.

Because, we are interested in the limit $\e=0$, the solutions of \eqref{eq:GeneralElliptic} in the absence of diffusion are useful.  We assume that there exists two  solutions $(F_A(x)>0,0)$ and $(0,F_B(x)>0)$ 
\begin{equation}\label{GH1a}
H_A \big(x, F_A(x), 0 \big) =0, \qquad H_B \big(x, 0, F_B(x) \big) =0,
\end{equation}
and that they are respectively  stable for  $x\in (0,x_a)$ and for $x\in (x_b,1)$, with $x_a>x_b$, i.e., there exists a bistable zone. It means that the linearized matrix at $(F_A(x),0)$ have negative eigenvalues for $x\in (0,x_a)$. The same holds at $(0,F_B(x))$ for $x\in (x_b,1)$. Moreover, we assume
\begin{equation}\label{GH1b}
H_B(x,F_A(x),0)>0 \quad \text{for }   x>x_a >x_b, \qquad H_A(x,0,F_B(x)) > 0 \quad \text{ for }  x<x_b<x_a.
\end{equation}
The first inequality, for instance, can be interpreted as follows: for $x>x_a$, $A$ loses stability because resource concentration of $B$ overcomes inhibition from $A$.

Finally, we assume that there exists a unique additional solution $(A^*(x)>0,B^*(x)>0)$ in the interval $(x_b,x_a)$ which is a saddle, i.e.
\begin{equation}\label{GH1c}  \begin{cases}
H_A(x,A^*(x),B^*(x))=0,  \qquad \quad \text{ for } x_b < x < x_a, 
\\[5pt]
 \partial_AH_A(x,A^*,B^*) \partial_BH_B(x,A^*,B^*)- \partial_BH_A(x,A^*,B^*) \partial_AH_B(x,A^*,B^*)  < 0 ,
 \\[5pt]
 \partial_AH_A(x,A^*,B^*)<0,\qquad \partial_BH_B(x,A^*,B^*)<0
 \end{cases}
 \end{equation}
that simply express the negativity of the determinant of the Jacobian matrix at this point:
	\begin{equation}\nonumber
		\left \vert
	 \begin{array}{ll}
	  A^*\;\partial_AH_A(x,A^*,B^*) & A^*\;\partial_BH_A(x,A^*,B^*) \\
	  B^*\;\partial_AH_B(x,A^*,B^*) & B^*\;\partial_BH_B(x,A^*,B^*)
	 \end{array}
	\right \vert < 0 .
	\end{equation}

In order to complete the definition of our system~\eqref{eq:GeneralElliptic}, we need to specify the boundary conditions considered. We are interested in solutions in which the system decomposes the domain into two separate areas in which $A$ or $B$ dominate. In the limit where $\varepsilon$ going to zero, it is therefore natural to consider that the system is subject to Dirichlet boundary conditions, but as the diffusion coefficient increases, loss of transcription factor through the boundary becomes increasingly prominent. These mechanisms correspond to Robin (also called third type) boundary conditions:
\begin{equation}\label{eq:BoundaryConditions}
\begin{cases}
A(0)-\sqrt{\varepsilon}\,\frac{\partial}{\partial x}A(0)=F_A(0),\qquad A(1)+\sqrt{\varepsilon}\,\frac{\partial}{\partial x}A(1)=0,
\\[5pt]
B(0)-\sqrt{\varepsilon}\,\frac{\partial}{\partial x}B(0)=0,\qquad \quad \ B(1)+\sqrt{\varepsilon}\,\frac{\partial}{\partial x}B(1)=F_B(1).
\end{cases}
\end{equation} 
\\

At this level of generality, assumptions~\eqref{GH2}--\eqref{GH1c} may appear formal. These are actually very natural, and we refer to Section~\ref{sec:model} for a basic example where they are satisfied. They formulate in a general fashion the elements of our problem: the first assumption expresses the existence of two stable differentiated states at both ends of the differentiating tissue in the absence of diffusion, whose domain of stability may overlap. In other words, in the absence of diffusion, levels of concentration of morphogen are sufficient to support differentiated states at the boundaries of the interval, and there exists generically an overlap between these two regions. Within this overlap (in the bistable regime), a saddle fixed point naturally emerges between the two solutions due to the properties of planar vector fields, and in our system, at this fixed point, concentrations of $A$ and $B$ perfectly balance the concentrations of morphogen. 

The main result that we will be demonstrating in the present manuscript is the fact that in the presence of small diffusion, a clear boundary between two differentiated domains exists and is unique, and may be characterized univocally. In detail, we shall demonstrate the following:
\begin{theo}\label{theo1}
 Under assumptions~\ref{GH2}--\ref{GH1c}, there exists a classical stationary solution $(A_\e, B_\e)$ of~\eqref{eq:GeneralElliptic} which satisfies 
\begin{equation}\label{eq:monotone}
\frac{d}{d x}A_\e(x) <0, \qquad \frac{d}{d x}B_\e(x) >0,
\end{equation}
and is obtained as $t\to \infty$ in the corresponding parabolic equation. Moreover
\begin{itemize}
 \item[(i)] As $ \varepsilon \to 0$,  $(A_\e,B_\e)$ converges a.e. towards a pair $(A_0,B_0)$. These maps are discontinuous at   some point $x^*\in [x_b,x_a]$ and have disjoint supports 
 $$
 {\rm supp}(A_0)=[0,x^*] \qquad \text{and} \qquad\ {\rm supp}(B_0)= [x^*,1].
 $$
 \item[(ii)] The point $x^*$ is characterized by the relation $c(x^*)=0$ where $c(\cdot)$ represents the speed of propagation of a traveling wave problem parametrized by $x$ (see equation~\eqref{eq:TravWave}).
 \end{itemize}
\end{theo}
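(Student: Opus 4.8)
The plan is to prove existence and monotonicity of $(A_\e,B_\e)$ first, then extract the $\e\to 0$ limit, and finally identify the boundary location through a traveling-wave analysis. I would open with the parabolic system associated to~\eqref{eq:GeneralElliptic}, namely $\partial_t A = \e d_A \Delta A + A\,H_A(x,A,B)$ and the analogous equation for $B$, subject to the boundary conditions~\eqref{eq:BoundaryConditions}. The key structural observation is that the sign conditions in~\eqref{GH2}, specifically $\partial_B H_A<0$ and $\partial_A H_B<0$, make this a \emph{competitive} (order-reversing) system. The standard device is to introduce the partial order in which $(A,B)\preceq(A',B')$ means $A\le A'$ and $B\ge B'$; with respect to this order the system becomes cooperative and generates a monotone semiflow. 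I would produce an ordered pair of sub- and supersolutions --- using $(F_A(x),0)$ and $(0,F_B(x))$, suitably adjusted near the boundary to accommodate the Robin conditions --- so that the monotone iteration started from the extremal data converges as $t\to\infty$ to a stationary solution. Monotonicity~\eqref{eq:monotone} in $x$ should follow by differentiating the stationary equations in $x$ and exploiting the monotonicity hypotheses $\partial_x H_A<0$, $\partial_x H_B>0$ together with the maximum principle applied to $\partial_x A_\e$ and $\partial_x B_\e$.

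For part~(i) I would pass to the limit $\e\to 0$. Because each $A_\e$ is monotone decreasing and bounded (by the invariant-region argument, $0\le A_\e\le \sup_x F_A$, and similarly for $B_\e$), Helly's selection theorem gives a.e. convergence along a subsequence to monotone limits $(A_0,B_0)$. Away from boundary layers the limit must solve the no-diffusion (algebraic) problem $A_0 H_A(x,A_0,B_0)=0$ and $B_0 H_B(x,A_0,B_0)=0$ pointwise. The monotonicity of the limits, combined with the bistable structure~\eqref{GH1a}--\eqref{GH1c} --- in particular the fact that outside $[x_b,x_a]$ only one of the two pure states is stable --- forces $A_0=F_A$, $B_0=0$ for $x$ below the jump and $A_0=0$, $B_0=F_B$ above it. The intermediate saddle $(A^*,B^*)$ is unstable and therefore cannot appear in the limit of the selected (stable) stationary states on an interval of positive measure; this rules out a coexistence plateau and pins the transition to a single point $x^*$, which must lie in the bistable zone $[x_b,x_a]$ since that is the only region where both pure states are admissible.

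For part~(ii), the characterization of $x^*$, I would perform a blow-up near the transition point. Rescaling space as $y=(x-x^*)/\sqrt{\e}$ zooms into the boundary layer; at leading order the spatial dependence of $H_A,H_B$ is frozen at $x=x^*$, and the stationary profile solves the one-dimensional traveling-wave system~\eqref{eq:TravWave} with the frozen coefficients. A stationary (time-independent in the blow-up variable) interface can persist only if the corresponding wave is standing, i.e.\ has zero speed. Since the sign conditions~\eqref{GH1a}--\eqref{GH1c} make this a bistable front connecting $(F_A,0)$ to $(0,F_B)$, the speed $c(x)$ of the front with frozen parameter $x$ is well defined and, by the comparison principle for bistable waves, strictly monotone in $x$; hence there is a unique zero $c(x^*)=0$, and a matched-asymptotics argument identifies this zero with the limit boundary. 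Concretely, $c(x)>0$ would push the interface one way and $c(x)<0$ the other, so the layer can only be stationary where $c$ vanishes.

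The main obstacle is the rigorous matching in part~(ii): controlling the boundary layer uniformly in $\e$ and justifying that the frozen-coefficient traveling-wave speed genuinely governs the limiting interface position. This requires (a) establishing existence, uniqueness, and strict monotonicity in $x$ of the bistable wave speed $c(x)$ for the frozen system --- which rests on the saddle condition~\eqref{GH1c} and the transversality it provides --- and (b) a quantitative stability/trapping estimate showing that the interface of $(A_\e,B_\e)$ cannot drift away from $\{c=0\}$ as $\e\to 0$. I expect the trapping estimate to be the delicate point, since it needs sub/supersolutions built from shifted traveling profiles whose error terms must be controlled against the $\sqrt{\e}$ scaling of the Robin conditions; the $\sqrt\e$ in~\eqref{eq:BoundaryConditions} is presumably chosen precisely so that the boundary layers at $x=0,1$ decouple from the interior interface at this scale.
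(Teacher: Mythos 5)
Your overall architecture coincides with the paper's (monotone parabolic approach for existence, Helly/BV compactness for the $\e\to0$ limit, a $\sqrt\e$ blow-up and a frozen traveling wave for the boundary), but at the two places where the real work happens you substitute heuristics for the paper's key technical device, and both substitutions leave genuine gaps. In part (i), the algebraic system \eqref{eq:ExCondition} has \emph{four} nonnegative roots, not three: besides the two pure states and the saddle there is $(0,0)$, so the limit could a priori be $(F_A,0)$ below some $x^*_-$, a dead zone $(0,0)$ on an interval $(x^*_-,x^*_+)$, and $(0,F_B)$ above. This configuration is monotone and bounded, so nothing you invoke excludes it, and you never mention it. Likewise your confinement of $x^*$ to $[x_b,x_a]$ on the grounds that this is ``the only region where both pure states are admissible'' does not close: by \eqref{GH1a} both pure states exist on all of $[0,1]$; what fails outside $[x_b,x_a]$ is only their \emph{stability}, and the claim that an unstable equilibrium cannot occupy a set of positive measure in the limit is precisely what has to be proven --- it does not follow from the construction of $(A_\e,B_\e)$ as long-time limits. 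The paper's tool for both exclusions is the WKB unknown $\varphi_A^\e=-\sqrt{\e}\log A_\e$ (well defined thanks to the uniform positivity estimate of Proposition~\ref{prop2}, which you also need and do not establish), whose uniform Lipschitz bound (Lemma~\ref{lemme3}) yields in the limit the viscosity relation \eqref{eq:viscA}: on a dead zone this reads $|\frac{d}{dx}\varphi_A^0|^2=-H_A(x,0,0)<0$, a contradiction, and the companion relation \eqref{eq:viscB} combined with \eqref{GH1b} kills $(F_A,0)$ beyond $x_a$ (Lemma~\ref{prop1}). For the saddle, the paper uses something both simpler and rigorous in place of your stability heuristic: \eqref{inversed} gives $\frac{d}{dx}A^*>0$, contradicting the monotonicity of $A_0$.

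In part (ii), the steps you leave open are exactly the crux, and one of your choices would fail outright. You rescale around the limit point $x^*$; but the transition layer of $(A_\e,B_\e)$ is only known to converge to $x^*$, possibly at a rate much slower than $\sqrt\e$, in which case your blow-up limit is a constant pure state and carries no information about the speed. The paper instead centers the blow-up at the $\e$-dependent crossing point $x^*_\e$ defined by $A_\e(x^*_\e)=B_\e(x^*_\e)$, so that the normalization $a_0(0)=b_0(0)$ survives the limit. One must then prove that the limit is a \emph{nontrivial} front with the limits at $\pm\infty$ prescribed in \eqref{eq:TravWave}: a priori it could vanish identically or converge at $-\infty$ to the saddle. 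The paper proves nontriviality through an integral bound on $\int H_A(x^*,a_0,b_0)\,dy$ that comes from the same WKB Lipschitz estimate you skipped, and rules out the saddle at infinity by an integration argument exploiting the sign of the determinant in \eqref{GH1c}. Once this is done, no ``trapping estimate'' or matched asymptotics is needed at all: the blow-up limit \emph{is} a zero-speed wave for the frozen problem at $x^*$, and uniqueness of $x^*$ follows from strict monotonicity of $x\mapsto c(x)$, which the paper obtains not by a comparison principle but by differentiating the wave system in $x$ and integrating against the signed dual eigenfunction furnished by Krein--Rutman theory, yielding $c'(x)<0$ (Theorem~\ref{th:xstar}, third step). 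As written, your proposal correctly identifies the delicate points but does not contain a proof of either the classification in (i) or the characterization in (ii).
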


This qualitative result falls in the class of free boundary problems, a well developed asymptotic theory in the frame of homogeneous elliptic or parabolic semilinear equations and systems~\cite{CarrPego, WF.PS:86, CE.PS:89, GB.LE.PS:90, BGS97, PS:98}. 
As reviewed in~\cite{C.I.L:92}, these results generally rely on the definition and analysis of viscosity solutions of the resulting Hamilton-Jacobi equation. The second point of the theorem involves a traveling wave with inhomogeneous speed. A vast literature, in particular in the domain of mathematical ecology of competing populations, have been interested in related questions (see e.g.~\cite{VVV, Xin_rev}). Most applications of this theory are related to front propagations and rules to compute their speeds and invasion properties in homogeneous or heterogeneous environments \cite{B_H_02}. Here, we ask a distinct question concerned with the determination of the precise point where a transition between two stable states occurs.

Theorem~\ref{theo1} will therefore show the existence of monotonic solutions. The monotonicity is a consequence of analogous properties of the equilibria in the absence of diffusion, which can be readily proved under the current assumptions. Similarly, the monotonicity of equilibria $A^*(x)$ and $B^*(x)$ can be characterized. This is the object of the following:
\begin{lemme}\label{lem:Monotonicity}
	Under assumption~\eqref{GH2}, the functions defined in \eqref{GH1a} and \eqref{GH1c} satisfy
	\begin{equation}\label{MonotnicityAB}
	\frac{d}{dx} F_A(x) < 0  \quad\text{for }\;  x \in [0, x_a), \qquad  \frac{d}{dx}F_B(x) > 0 \quad \text{for }\;  \in (x_b, 1], 
		 \end{equation}
	\begin{equation}\label{inversed}
	   \frac{d}{dx}A^*(x)>0\quad\text{and}\quad \frac{d}{dx}B^*(x)<0,\qquad x_b<x<x_a.
	 \end{equation}
\end{lemme}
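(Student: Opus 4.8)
The plan is to treat all four functions as implicitly defined zero sets and to differentiate their defining relations, reading off the signs of the resulting derivatives from the structural hypotheses \eqref{GH2}--\eqref{GH1c}. The only ingredient that does not come directly from the sign assumptions is the sign of $\partial_A H_A$ at $(F_A(x),0)$ and of $\partial_B H_B$ at $(0,F_B(x))$, which I would extract from the stability hypotheses; this is the one genuinely non-mechanical step.

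First, for $F_A$, differentiating $H_A(x,F_A(x),0)=0$ in $x$ yields
\[
\partial_x H_A + \partial_A H_A \, F_A'(x) = 0, \qquad F_A'(x) = -\frac{\partial_x H_A}{\partial_A H_A},
\]
all evaluated at $(x,F_A(x),0)$. The implicit function theorem applies as soon as $\partial_A H_A \neq 0$. To fix this sign I would compute the Jacobian of the reaction field $(A H_A, B H_B)$ at the equilibrium $(F_A(x),0)$: since $H_A$ vanishes there and $B=0$, the matrix is upper triangular with diagonal entries $F_A \partial_A H_A$ and $H_B(x,F_A,0)$, so its eigenvalues are exactly these. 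Stability on $(0,x_a)$ forces both to be negative, hence $\partial_A H_A(x,F_A(x),0)<0$ on $[0,x_a)$. Combined with $\partial_x H_A<0$ from \eqref{GH2}, this gives $F_A'(x)<0$. The argument for $F_B$ is symmetric: the Jacobian at $(0,F_B(x))$ is lower triangular with diagonal entries $H_A(x,0,F_B)$ and $F_B \partial_B H_B$, so stability on $(x_b,1)$ gives $\partial_B H_B(x,0,F_B)<0$, and with $\partial_x H_B>0$ one obtains $F_B'(x)>0$.

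For the saddle branch $(A^*,B^*)$, which solves $H_A=H_B=0$, I would differentiate both relations simultaneously to obtain the linear system
\[
M \begin{pmatrix} (A^*)' \\ (B^*)' \end{pmatrix} = -\begin{pmatrix} \partial_x H_A \\ \partial_x H_B \end{pmatrix}, \qquad M = \begin{pmatrix} \partial_A H_A & \partial_B H_A \\ \partial_A H_B & \partial_B H_B \end{pmatrix},
\]
all evaluated at $(x,A^*(x),B^*(x))$. By the saddle condition \eqref{GH1c} one has $\det M<0$, so $M$ is invertible and Cramer's rule gives explicit expressions for $(A^*)'$ and $(B^*)'$. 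Counting signs using $\partial_x H_A<0$, $\partial_x H_B>0$, $\partial_B H_A<0$, $\partial_A H_B<0$ from \eqref{GH2} together with $\partial_A H_A<0$, $\partial_B H_B<0$ from \eqref{GH1c}, each numerator turns out to have a definite sign; dividing by the negative determinant then yields $(A^*)'>0$ and $(B^*)'<0$, which is \eqref{inversed}.

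I expect no serious obstacle: the whole argument is an application of the implicit function theorem followed by careful sign bookkeeping. The single delicate point is the passage from the stability hypotheses to the signs of the diagonal derivatives $\partial_A H_A$ and $\partial_B H_B$ at the pure states, which rests on the triangular structure of the linearized reaction matrix there, together with the strict positivity $F_A, F_B, A^*, B^* > 0$ guaranteeing that the derivatives one divides by are nonzero.
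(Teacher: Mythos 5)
Your proposal is correct and follows essentially the same route as the paper: implicit differentiation of the defining relations, with Cramer's rule and the sign hypotheses~\eqref{GH2}, \eqref{GH1c} yielding exactly the paper's displayed formulas for $\frac{d}{dx}A^*$ and $\frac{d}{dx}B^*$. If anything, your treatment of the pure states is more careful than the paper's: where the paper attributes the sign $\partial_A H_A(x,F_A(x),0)<0$ to assumption~\eqref{GH1b}, you correctly derive it from the stability hypothesis via the triangular structure of the linearized reaction matrix $(A H_A, B H_B)$ at $(F_A(x),0)$.
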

\begin{proof}
	 Since $F_A(x)$ is a fixed point of the system in the absence of diffusion, we have:
	\begin{equation}\nonumber
	 \frac{d}{dx}H_A(x,F_A(x),0)= \partial_xH_A(x,F_A(x),0)+\partial_AH_A(x,F_A(x),0)\frac{d}{dx}F_A(x)=0,
	\end{equation}
	 and therefore
	\begin{equation}\nonumber
	 \frac{d}{dx}F_A(x)=-\frac{\partial_xH_A(x,F_A(x),0)}{\partial_AH_A(x,F_A(x),0)}.
	\end{equation}
	Assumption~\eqref{GH1b} ensures that $ \partial_A H_A(x,F_A(x),0)<0$ readily implies that $\frac{d}{dx}F_A(x)<0$ for $x$ in $[0,x_a)$. By a similar argument, $\frac{d}{dx}F_B(x)>0$ for any $x$ in $(x_b,1]$.

	Hypotheses~\eqref{GH2} and~\eqref{GH1c}  also constrain the monotonicity of $A^*$ and $B^*$. Indeed, since the vector function $(H_A,H_B)$ is constant along the curve $(x,A^*(x),B^*(x))$, we have
	\begin{equation}\nonumber
	 \frac{d}{dx}A^*(x)=\frac{\partial_B H_A\;\partial_xH_B-\partial_xH_A\;\partial_BH_B}{\partial_AH_A\;\partial_BH_B-\partial_BH_A\;\partial_AH_B},\qquad\frac{d}{dx}B^*(x)=\frac{\partial_xH_A\;\partial_AH_B-\partial_AH_A\;\partial_xH_B}{\partial_AH_A\;\partial_BH_B-\partial_BH_A\;\partial_AH_B}.
	\end{equation}
	Using the assumptions \eqref{GH2} and \eqref{GH1c}, we conclude the inequalities \eqref{inversed}.
\end{proof}

The manuscript is devoted to the demonstration of Theorem~\ref{theo1}, and to the development of an application to a specific model of neuronal differentiation. We shall start by proving the existence of a monotonic solution of the elliptic system~\eqref{eq:GeneralElliptic}, \eqref{eq:BoundaryConditions} by analyzing the long-time properties of the associated parabolic system. The proof of the existence of monotonic solutions and the characterization of the boundary combines stability and monotonicity arguments, WKB asymptotics and a suitable dilation of the spatial variable. The proof proceeds as follows: the limit where $\e \to 0$ is investigated in section~\ref{sec:vanishes} and we will show existence and uniqueness of the boundary point $x^*$ for small diffusions, and in section~\ref{sec:characterization}, we characterize the boundary point $x^*$ as the value when a certain traveling wave problem has zero speed, completing the proof of Theorem~\ref{theo1}. Section~\ref{sec:application} puts in good use this theory on a simple model of neuronal differentiation. 

\section{Analysis of the parabolic problem}
\label{app:A}

We start with the parabolic problem associated with~\eqref{eq:GeneralElliptic}
\begin{equation}\label{eq:GeneralParabolic}
	\begin{cases}
	 \partial_t A-\varepsilon d_A\Delta A = A \, H_A(x,A,B),  \qquad 0 < x < 1, \; t \geq 0,
	 \\[5pt]
	 \partial_t B-\varepsilon d_B\Delta B = B\, H_B(x,A,B),
	\end{cases}
\end{equation} 
completed again with the Robin boundary conditions~\eqref{eq:BoundaryConditions}.

We show that for a well chosen pair of initial conditions, solutions to the parabolic problem~\eqref{eq:GeneralParabolic}-\eqref{eq:BoundaryConditions} are monotonic in time. Since all coefficients are regular, solutions are classical and therefore bounded. From here, the existence of steady states is granted. 

Because $F_B$ is an increasing function in $(x_b,1]$ we can expect that any non-negative solution for the second equation of~\eqref{eq:GeneralParabolic} is upper bounded by $F_B(1)$. Under the change of variables $\mathcal B=F_B(1)-B$, system~\eqref{eq:GeneralParabolic} becomes
\begin{equation}\label{eq:invB}
\begin{cases}
 \partial_t A-\varepsilon d_A\Delta A = AH_A\big(x,A,F_B(1)-\mathcal B\big),  \qquad 0 < x < 1, \; t \geq 0,
 \\[5pt]
 \partial_t \mathcal B-\varepsilon d_B\Delta \mathcal B = -(F_B(1)-\mathcal B)H_B\big(x,A,F_B(1)-\mathcal B\big),
\end{cases}
\end{equation}
with the respective boundary conditions
\begin{equation}\label{eq:BoundConditions2}
\begin{cases}
A(0)-\sqrt{\varepsilon}\,\frac{\partial}{\partial x}A(0)=F_A(0),\qquad A(1)+\sqrt{\varepsilon}\,\frac{\partial}{\partial x}A(1)=0,
\\[5pt]
\mathcal B(0)-\sqrt{\varepsilon}\,\frac{\partial}{\partial x}\mathcal B(0)=F_B(1),\qquad \mathcal B(1)+\sqrt{\varepsilon}\,\frac{\partial}{\partial x}\mathcal B(1)=0.
\end{cases}
\end{equation}
hypothesis~\ref{GH2}, \ref{GH1a} and \ref{GH1b} imply that the pair $(0,0)$ (respectively $(F_A(0),F_B(1))$) is a sub-solution (resp. super-solution) of the steady state problem related to~\eqref{eq:invB}-\eqref{eq:BoundConditions2}. Therefore, taking $(0,F_B(1))$ as initial condition in~\eqref{eq:GeneralParabolic} we have the existence of a regular solution $(A_\e(t,x),B_\e(t,x))$ such that:
\begin{equation} \label{uniform_bounds0}
0 \leq A_\e(t,x) \leq F_A(0)\quad \text{and} \quad 0\leq B_\e(t,x) \leq F_B(1),\qquad t\geq0,1\leq x\leq1 .
\end{equation}

\begin{lemme}\label{lemme0}
Then for all $t\geq0$ and $x\in [0,1]$, we have $\partial_t A_\e(t,x)\geq0$ and $\partial_t B_\e(t,x)\leq0$.
\end{lemme}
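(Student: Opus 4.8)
The statement to prove is Lemma~\ref{lemme0}: with the particular initial condition $(0,F_B(1))$, the solution satisfies $\partial_t A_\e \geq 0$ and $\partial_t B_\e \leq 0$ for all time.

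\textbf{Overall strategy.} The plan is to exploit the fact that the chosen initial datum is an ordered sub/super-solution pair and to turn the monotonicity-in-time statement into a comparison (maximum principle) argument. I would work with the transformed system~\eqref{eq:invB}--\eqref{eq:BoundConditions2}, where both unknowns $A$ and $\mathcal{B}=F_B(1)-B$ satisfy the \emph{same} type of competitive structure, so that the system becomes \emph{cooperative} (monotone) after the change of variable $B \mapsto \mathcal{B}$. Indeed, the assumptions~\eqref{GH2} give $\partial_B H_A<0$ and $\partial_A H_B<0$, which are exactly the sign conditions needed for the right-hand sides of~\eqref{eq:invB} to be quasi-monotone increasing in the off-diagonal variable. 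Once monotonicity of the semiflow is established, the desired time-monotonicity of the solution follows from the initial datum being an ordered sub-solution.

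\textbf{Key steps in order.} First I would verify that $(0,F_B(1))$, viewed in the $(A,\mathcal{B})$ variables as $(0,0)$, is a (time-independent) sub-solution of~\eqref{eq:invB}: plugging $A\equiv 0$, $\mathcal{B}\equiv 0$ into the equations, the diffusion and time-derivative terms vanish, and one checks $0 \geq 0\cdot H_A(x,0,F_B(1))$ trivially for the first equation, while for the second $0 \geq -(F_B(1))H_B(x,0,F_B(1))$ reduces to $H_B(x,0,F_B(1))\geq 0$, which holds since $F_B(1)=F_B$ evaluated where $H_B(x,0,F_B(x))=0$ and monotonicity in $x$ from~\eqref{GH2} controls the sign. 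The boundary conditions must also be checked to be consistent with the sub-solution inequality. Second, I would differentiate the system in time: setting $a := \partial_t A_\e$ and $\beta := \partial_t \mathcal{B}_\e$ and differentiating~\eqref{eq:invB}, one obtains a \emph{linear} parabolic system for $(a,\beta)$ whose zeroth-order coefficient matrix has the same off-diagonal sign structure (the cross terms carry factors $\partial_B H_A<0$ and $\partial_A H_B<0$, which become nonnegative couplings in the $\mathcal{B}$ variable). Third, I would evaluate the initial data for this linearized system: at $t=0$, $a(0,x)=\partial_t A_\e|_{t=0} = \varepsilon d_A \Delta A_\e|_{t=0} + A_\e H_A|_{t=0} = 0$ since $A_\e(0,\cdot)\equiv 0$, and similarly $\beta(0,x) = -(F_B(1))H_B(x,0,F_B(1)) \leq 0$, i.e.\ $a(0,\cdot)\geq 0$ and $\beta(0,\cdot)\leq 0$. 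Fourth, applying the maximum principle for the cooperative linearized system (equivalently, invoking the comparison principle for the monotone semiflow generated by~\eqref{eq:invB}) propagates these signs for all $t>0$: $a\geq 0$ and $\beta\leq 0$, which translates back to $\partial_t A_\e\geq 0$ and $\partial_t B_\e = -\partial_t \mathcal{B}_\e \leq 0$.

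\textbf{Main obstacle.} The delicate point is the rigorous justification that the linearized system for $(a,\beta)$ is genuinely quasi-monotone (cooperative) and that the maximum principle applies up to the Robin boundary, including sign-checking of the boundary contributions. The off-diagonal coupling coefficients are $A_\e\,\partial_B H_A$ and $-(F_B(1)-\mathcal{B})\,\partial_A H_B$ evaluated along the solution; their signs are controlled by~\eqref{GH2} only where $A_\e\geq 0$ and $F_B(1)-\mathcal{B}=B_\e\geq 0$, which is guaranteed by the a priori bounds~\eqref{uniform_bounds0}. I would need to confirm that the Robin boundary conditions~\eqref{eq:BoundConditions2} are compatible with the comparison principle (they are, since they are of monotone/dissipative type with nonnegative sign in front of the normal derivative), and that enough regularity is available to differentiate in time and apply a classical (rather than merely viscosity) maximum principle — this is ensured by the smoothness of $H_A,H_B$ and parabolic regularity already invoked in the text. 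Apart from this structural verification, the argument is a fairly standard application of the theory of monotone dynamical systems for cooperative reaction-diffusion systems.
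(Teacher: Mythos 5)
Your overall strategy---differentiate the system in time and propagate the signs of $(\partial_t A_\varepsilon,\partial_t B_\varepsilon)$ using the sign structure of \eqref{GH2}---is the right one, and it is essentially the paper's: the paper works directly in the competitive variables $(A,B)$, multiplies the linearized equations by $u_-=\min\{0,\partial_t A_\varepsilon\}$ and $v_+=\max\{0,\partial_t B_\varepsilon\}$, and closes with Gr\"onwall, which is just an $L^2$ implementation of the comparison argument you invoke. However, your sign bookkeeping is wrong in three linked places, and as written your chain of inequalities does not prove the lemma.

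First, the claim $H_B\big(x,0,F_B(1)\big)\geq 0$ is false. Since $H_B\big(1,0,F_B(1)\big)=0$ by \eqref{GH1a} and $\partial_x H_B>0$ by \eqref{GH2}, monotonicity in $x$ gives $H_B\big(x,0,F_B(1)\big)\leq 0$ for $x\leq 1$---the opposite direction. (Relatedly, the sub-solution inequality for the second equation of \eqref{eq:invB} at $\mathcal B\equiv 0$ is $0\leq -F_B(1)H_B\big(x,0,F_B(1)\big)$, not $0\geq\cdots$; with the correct sign of $H_B$ this is exactly what holds.) Consequently $\beta(0,x)=\partial_t\mathcal B(0,x)=-F_B(1)H_B\big(x,0,F_B(1)\big)\geq 0$, not $\leq 0$ as you assert. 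Second, the cone you then try to propagate, $\{a\geq 0,\ \beta\leq 0\}$, is \emph{not} invariant for a cooperative system: cooperativity preserves the positive cone $\{a\geq 0,\ \beta\geq 0\}$, while mixed-sign cones are what two-species \emph{competitive} systems preserve (that is the structure the paper exploits in the original variables $(u,v)=(\partial_t A_\varepsilon,\partial_t B_\varepsilon)$). So, with your claimed signs, the maximum-principle step fails. Third, the translation back is inverted: since $B=F_B(1)-\mathcal B$, one has $\partial_t B_\varepsilon=-\beta$, so $\beta\leq 0$ would yield $\partial_t B_\varepsilon\geq 0$, the opposite of the lemma. All three defects are repaired simultaneously by correcting the first one: with $a(0,\cdot)=0$ and $\beta(0,\cdot)\geq 0$, positive-cone invariance for the cooperative linearized system (the Robin conditions, once differentiated in time, become homogeneous and are compatible) gives $a\geq 0$ and $\beta\geq 0$ for all $t$, hence $\partial_t A_\varepsilon\geq 0$ and $\partial_t B_\varepsilon=-\beta\leq 0$. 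So the idea is sound and equivalent to the paper's, but the proof as written contains genuine sign errors, not mere typos, since your justifications commit to the wrong directions.
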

\begin{proof}
Defining $u:=\partial_t A_\varepsilon$ and $v:=\partial_t B_\varepsilon$, we have
\begin{eqnarray*}
 \partial_t u -d_A\varepsilon \Delta u &=& uH_A+A\;\partial_AH_Au+A\;\partial_BH_Av,\\
 \partial_t v -d_B\varepsilon \Delta v &=& vH_B+B\;\partial_AH_Bu+B\;\partial_BH_Bv,
\end{eqnarray*}
multiplying the first equation by $u_-:=\min\{0,u\}$, the second one by $v_+:=\max\{0,v\}$ and integrating over $[0,1]$ we get
\begin{eqnarray*}
 \frac12\frac d{dt}\int u_-^2 +d_A\varepsilon\int |\partial_xu_-|^2 - d_A\varepsilon u_- \;\partial_xu_-\big|^1_0
 &=&
 \int u_-^2\big(H_A+A\;\partial_AH_A\big)+\int A\;\partial_BH_Au_-v,\\
 \frac12\frac d{dt}\int v_+^2 +d_B\varepsilon\int |\partial_xv_+|^2 -d_B\varepsilon v_+ \;\partial_xv_+\big|^1_0&=& \int v_+^2\big(H_B+B\;\partial_BH_B\big)+\int B\;\partial_AH_Bv_+u.
\end{eqnarray*}
Time continuity of $(A_\e(t,x),B_\e(t,x))$ together with initial conditions imply that for any $x$: $$u_-(0,x)=0\quad\text{and}\quad v_+(0,x)=0.$$ Thus, there exists $C>0$ such that
\begin{equation}\nonumber
 \frac{d}{dt}\int(u_-^2+v_+^2)\leq C\int (u_-^2+v_+^2),
\end{equation}
with zero initial condition. We conclude using Gr\"onwall's lemma.
\end{proof}

\subsection{Monotonicity in space}

We have shown that the monotonicity property of the maps $H_A$ and $H_B$ in space implies monotonicity of $F_A(x)$ and $F_B(x)$, solutions of the zero diffusion problem at location $x$. This is also true of the maps $(A_\varepsilon,B_\varepsilon)$  solutions of the parabolic equation~\eqref{eq:GeneralParabolic}. In detail, we show that monotonic initial conditions ensure monotonic solutions $(A_\varepsilon,B_\varepsilon)$ in space for all times. This property has two remarkable implications: time dependent solutions belong to the bounded variation class and also their respective steady states.

\begin{lemme}\label{lemma2}
  For any $\varepsilon>0$ fixed, let us consider any solution $(A_\varepsilon,B_\varepsilon)$ of~\eqref{eq:GeneralParabolic}-\eqref{eq:BoundaryConditions} with initial conditions $A(0,x)$ decreasing and $B(0,x)$ increasing. Under assumption~\eqref{uniform_bounds0}, we have for all $t\geq0$ 
\[\frac{\partial}{\partial x} A_\varepsilon(t,x)\leq0\quad\text{and}\quad\frac{\partial}{\partial x} B_\e(t,x)\geq0,\quad 0\leq x\leq1.\]
\end{lemme}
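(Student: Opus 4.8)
The plan is to mimic the energy argument used for Lemma~\ref{lemme0}, but now differentiating in the space variable rather than in time. Set $p:=\partial_x A_\varepsilon$ and $q:=\partial_x B_\varepsilon$; by parabolic smoothing the solution is regular enough in $x$ to justify this. Differentiating the two equations of~\eqref{eq:GeneralParabolic} with respect to $x$ gives
\begin{align*}
\partial_t p - \varepsilon d_A \partial_{xx} p &= \big(H_A + A\,\partial_A H_A\big)p + A\,\partial_B H_A\,q + A\,\partial_x H_A,\\
\partial_t q - \varepsilon d_B \partial_{xx} q &= \big(H_B + B\,\partial_B H_B\big)q + B\,\partial_A H_B\,p + B\,\partial_x H_B,
\end{align*}
where the partial derivatives of $H_A,H_B$ are evaluated at $(x,A_\varepsilon,B_\varepsilon)$. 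To prove $p\le 0$ and $q\ge 0$ I would control the truncations $p_+:=\max\{0,p\}$ and $q_-:=\min\{0,q\}$, multiplying the first equation by $p_+$, the second by $q_-$, and integrating over $[0,1]$, exactly as in the proof of Lemma~\ref{lemme0}.

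Two structural features make the signs cooperate. First, the inhomogeneous forcing reinforces the desired monotonicity: since $A_\varepsilon\ge 0$ and $\partial_x H_A<0$ by~\eqref{GH2}, one has $\int A\,\partial_x H_A\,p_+\le 0$, and symmetrically $\int B\,\partial_x H_B\,q_-\le 0$ because $\partial_x H_B>0$; both terms are discarded. Second, and this is the delicate point, the boundary contributions $-\varepsilon d_A\,[\,\partial_x p\;p_+\,]_0^1$ and $-\varepsilon d_B\,[\,\partial_x q\;q_-\,]_0^1$ actually vanish. Indeed the Robin conditions~\eqref{eq:BoundaryConditions} read $\partial_x A_\varepsilon(t,0)=(A_\varepsilon(t,0)-F_A(0))/\sqrt\varepsilon$ and $\partial_x A_\varepsilon(t,1)=-A_\varepsilon(t,1)/\sqrt\varepsilon$, so the uniform bounds~\eqref{uniform_bounds0}, $0\le A_\varepsilon\le F_A(0)$, force $p(t,0)\le0$ and $p(t,1)\le0$, i.e.\ $p_+$ vanishes at both endpoints for all $t$. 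The same computation with $0\le B_\varepsilon\le F_B(1)$ yields $q(t,0)\ge0$ and $q(t,1)\ge0$, so $q_-$ vanishes at the boundary. Hence both boundary terms are identically zero, and the nonnegative energies $\varepsilon d_A\int|\partial_x p_+|^2$ and $\varepsilon d_B\int|\partial_x q_-|^2$ may be dropped.

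It remains to close the estimate on the coupling terms $\int A\,\partial_B H_A\,q\,p_+$ and $\int B\,\partial_A H_B\,p\,q_-$. Writing $q=q_++q_-$ and $p=p_++p_-$, the favourable pieces $\int A\,\partial_B H_A\,q_+\,p_+$ and $\int B\,\partial_A H_B\,p_-\,q_-$ are nonpositive (using $\partial_B H_A<0$, $\partial_A H_B<0$ and the signs of the remaining factors) and are discarded, while the remaining pieces are bounded, via the boundedness of $A_\varepsilon,B_\varepsilon$ and of the $C^2$ data together with Young's inequality, by $\tfrac{C}{2}\int(p_+^2+q_-^2)$. The diagonal terms $\int(H_A+A\,\partial_A H_A)p_+^2$ and $\int(H_B+B\,\partial_B H_B)q_-^2$ are likewise controlled by $C\int(p_+^2+q_-^2)$. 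Adding the two identities yields
\[
\frac{d}{dt}\int_0^1\big(p_+^2+q_-^2\big)\,dx \;\le\; C\int_0^1\big(p_+^2+q_-^2\big)\,dx .
\]
The hypotheses that $A(0,x)$ is decreasing and $B(0,x)$ is increasing give $p_+(0,\cdot)\equiv0$ and $q_-(0,\cdot)\equiv0$, so Gr\"onwall's lemma forces $p_+\equiv0$ and $q_-\equiv0$ for all $t\ge0$, which is exactly $\partial_x A_\varepsilon\le0$ and $\partial_x B_\varepsilon\ge0$.

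The main obstacle is not the interior reaction terms, whose signs are essentially arranged by~\eqref{GH2}, but the boundary contributions: a priori there is no reason for $[\,\partial_x p\;p_+\,]_0^1$ to have a definite sign, and the whole argument hinges on the observation that the specific Robin conditions~\eqref{eq:BoundaryConditions}, combined with the maximum-principle bounds~\eqref{uniform_bounds0}, pin the signs of $p$ and $q$ at the two endpoints so that the truncations vanish there. A secondary technical point is justifying enough spatial regularity to differentiate~\eqref{eq:GeneralParabolic} in $x$ and to use $p_+,q_-$ as test functions, which follows from parabolic regularity away from $t=0$ together with a standard approximation of the truncations by smooth functions.
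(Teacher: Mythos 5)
Your proposal is correct and follows essentially the same route as the paper's proof: an energy estimate on the truncations $(\partial_x A_\varepsilon)_+$ and $(\partial_x B_\varepsilon)_-$, discarding the sign-favourable forcing and coupling terms via~\eqref{GH2}, bounding the rest with Young's inequality, and closing with Gr\"onwall from the monotone initial data. Your explicit verification that the Robin conditions~\eqref{eq:BoundaryConditions} together with the bounds~\eqref{uniform_bounds0} force the truncations to vanish at $x=0,1$ (so the boundary terms drop) is exactly the step the paper invokes tersely, spelled out correctly.
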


\begin{proof}
The proof proceeds as that of Lemma~\ref{lemme0}: we define $u:=\partial_x A_\varepsilon$ and $v:=\partial_x B_\varepsilon$, we have
\begin{eqnarray*}
 \partial_t u -d_A\varepsilon \Delta u &=& uH_A+A\;\partial_xH_ A+A\;\partial_AH_Au+A\;\partial_BH_Av,\\
 \partial_t v -d_B\varepsilon \Delta v &=& vH_B+B\;\partial_xH_B+B\;\partial_AH_Bu+B\;\partial_BH_Bv,
\end{eqnarray*}
multiplying the first equation by $u_+$ and the second one by $v_-$, integrating over $[0,1]$ and using that boundary conditions~\eqref{eq:BoundaryConditions} and~\eqref{uniform_bounds0}, we get
\begin{eqnarray*}
 &&\frac12\frac d{dt}\int u_+^2 +d_A\varepsilon\int |\partial_x u_+|^2 - d_A\e u_+\; \partial_xu_+\big|^1_0
 =\\
 &&\qquad\qquad  \int u_+^2\big(H_A+A\;\partial_AH_A\big)+\int A\;\partial_BH_Au_+v+\int A\;\partial_xH_Au_+\leq C\int u_+^2-C\int u_+v_-,\\
&& \frac12\frac d{dt}\int v_-^2 +d_B\varepsilon\int |\partial_xv_-|^2 -d_B\e v_-\; \partial_xv_-\big|^1_0=\\
 &&\qquad\qquad \int v_-^2\big(H_B+B\;\partial_BH_B\big)+\int B\;\partial_AH_Bv_-u+\int B\;\partial_xH_B v_-\leq C\int v_-^2-C\int u_+v_-,
\end{eqnarray*}
where we have also used that $\partial_xH_A\leq0$ and $\partial_xH_B\geq0$. It is then easy to see that there exists $C>0$ such that:
\begin{equation}\nonumber
 \frac{d}{dt}\int(u_+^2+v_-^2)\leq C\int (u_+^2+v_-^2),
\end{equation}
with zero initial condition, and to conclude the proof using Gr\"onwall's lemma.
\end{proof}

We have therefore constructed a pair $(A_\e(t,x),B_\e(t,x))$ such that~\eqref{uniform_bounds0} is satisfied. We can apply Lemma~\ref{lemma2} and find that for any time $A_\e(t,x)$ is decreasing and $B_\e(t,x)$ increasing. Moreover, Lemma~\ref{lemme0} together with~\eqref{uniform_bounds0} imply that pointwise in space $A_\e(t,x)$ (resp. $B_\e(t,x)$) converges to $A_\e(x)$ (reps. $B_\e(x))$ solution to~\eqref{eq:GeneralElliptic}, together with the boundary condition~\eqref{eq:BoundaryConditions},  
in the weak sense. Bootstrap method allows us to conclude that $A_\e(x),B_\e(x)\in C^2(0,1)\cap C_0([0,1])$ which proves the first part of Theorem~\ref{theo1} and~\eqref{eq:monotone}.

\subsection{Positivity of the solutions}\label{app:B}
We now consider the pair $(A_\e,B_\e)$ solution of the stationary problem~\eqref{eq:GeneralParabolic}-\eqref{eq:BoundaryConditions} for $\e>0$ fixed. We now provide finer estimates of sub-solutions in order to control $A_\e(0)$ and $B_\e(1)$ away from zero.

\begin{prop}\label{prop2}
 There exists $\e_0>0$ such that for any $\e<\e_0$, $A_\e$ is strictly positive and $A_\e(0)$ is, uniformly in $\e$, larger  than some $\delta_A>0$. The same holds for $B_\e$ and $B_\e(1)$.
\end{prop}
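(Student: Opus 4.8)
\textbf{Proof proposal for Proposition~\ref{prop2}.}

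The plan is to construct an explicit positive sub-solution of the stationary problem that is bounded below near $x=0$ uniformly in $\e$, and then invoke the comparison principle. The key structural fact I would exploit is that $(0,F_B(1))$ is the initial datum whose parabolic flow converges monotonically (by Lemmas~\ref{lemme0} and~\ref{lemma2}) to the steady state $(A_\e,B_\e)$; hence it suffices to bound the steady state from below by a sub-solution $\underline A$ satisfying $-\e d_A \underline A'' \le \underline A\, H_A(x,\underline A, B_\e)$ together with the Robin condition at $x=0$. Since I only want positivity and a lower bound near the left endpoint, I would work on a small fixed interval $[0,\eta]$ with $\eta < x_b$, where assumption~\eqref{GH1b} guarantees $H_A(x,0,F_B(x))>0$ and, by continuity of the $C^2$ map $H_A$ and the bound $B_\e \le F_B(1)$, there is a uniform constant $m>0$ and a level $a_0>0$ such that $H_A(x,A,B) \ge m$ for all $x\in[0,\eta]$, $0\le A\le a_0$ and $0\le B\le F_B(1)$.

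First I would fix such $\eta,m,a_0$ from the strict inequalities in~\eqref{GH2} and~\eqref{GH1b} (using $H_A(x,0,0)>0$ at $x=0$ and the openness of the condition $H_A(x,0,F_B(x))>0$ for $x<x_b$). On $[0,\eta]$ I would then seek a sub-solution of boundary-layer type, $\underline A(x) = \delta\,\phi\big(x/\sqrt\e\big)$ with $\phi$ a fixed profile chosen so that $-d_A \phi'' \le \tfrac{m}{2}\phi$, for instance a suitably scaled and truncated cosine or an affine profile; the scaling $x/\sqrt\e$ is dictated by the $\sqrt\e$ appearing in the Robin condition~\eqref{eq:BoundaryConditions}, and it is precisely this matching that produces an $\e$-independent value $\underline A(0)=:\delta_A$. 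Choosing $\delta$ small enough that $\underline A \le a_0$ on $[0,\eta]$ keeps us in the range where $H_A \ge m$, so the differential inequality $-\e d_A \underline A'' = -d_A\delta\phi'' \le \tfrac m2 \underline A \le \underline A H_A(x,\underline A,B_\e)$ holds. I would then verify the Robin condition $\underline A(0)-\sqrt\e\,\underline A'(0) \le F_A(0)$ and arrange $\underline A(\eta)\le A_\e(\eta)$ (using that $A_\e>0$ on the open interval, or by a separate crude interior bound), so that $\underline A$ is a genuine sub-solution on $[0,\eta]$ lying below $A_\e$; the comparison principle then gives $A_\e \ge \underline A$, whence $A_\e(0)\ge \delta_A>0$ uniformly in $\e<\e_0$. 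Strict positivity of $A_\e$ on all of $(0,1)$ follows from the strong maximum principle applied to the stationary equation once positivity at one point is known. The statement for $B_\e$ and $B_\e(1)$ follows symmetrically using the second inequality in~\eqref{GH1b} near $x=1$.

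The main obstacle I anticipate is the construction of the boundary-layer sub-solution so that it is simultaneously compatible with the Robin condition and small enough to remain in the region where $H_A$ is bounded below, all while the lower bound $\delta_A$ stays independent of $\e$. The delicate point is the interplay between the $\sqrt\e$ in the boundary condition and the $\e$ in front of the Laplacian: the natural WKB/boundary-layer width is $\sqrt\e$, so the profile must be written in the stretched variable $\xi = x/\sqrt\e$, and one must check that the Robin trace picks up an $O(1)$ (not $o(1)$ or $O(1/\sqrt\e)$) contribution, which is exactly what forces $\delta_A$ to be uniform. A secondary technical nuisance is matching at the inner endpoint $x=\eta$, which I would handle either by truncating the profile to vanish there (making $\underline A(\eta)=0\le A_\e(\eta)$ automatic) or by a standard interior positivity estimate for $A_\e$.
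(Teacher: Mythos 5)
Your overall strategy (an explicit sub-solution compared against $A_\e$, built as a boundary layer in the stretched variable $x/\sqrt{\e}$ so that its Robin trace at $x=0$ is $O(1)$) is exactly the mechanism of the paper's proof, and your remark that the $\sqrt{\e}$ in the Robin condition must balance the $1/\sqrt{\e}$ in the profile's derivative is precisely the right key point. However, there is a genuine gap in the step that powers your differential inequality. You claim that for some $\eta<x_b$ there exist $m>0$ and $a_0>0$ with $H_A(x,A,B)\ge m$ for all $x\in[0,\eta]$, $0\le A\le a_0$ \emph{and all} $0\le B\le F_B(1)$. This does not follow from~\eqref{GH2} and~\eqref{GH1b}: assumption~\eqref{GH1b} gives $H_A(x,0,F_B(x))>0$ for $x<x_b$, i.e.\ positivity at the \emph{local} morphogen value $F_B(x)$, and since $\partial_B H_A<0$ while $F_B(1)=\max F_B$, this says nothing about $H_A(x,0,F_B(1))$, which may well be negative. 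Concretely, in the model of Section~\ref{sec:model} one has $H_A=F_A(x)-A-s_AB$, and whenever $s_A F_B(1)>F_A(0)$ the quantity $H_A(x,0,F_B(1))$ is negative on all of $[0,1]$, so no such $m$ exists. The point matters because at this stage of the argument the only available information on $B_\e$ near $x=0$ is the global bound $B_\e\le F_B(1)$ from~\eqref{uniform_bounds0}; you cannot yet assert that $B_\e$ is close to $F_B(x)$ or small near the left endpoint --- ruling out the scenario $A_0\equiv 0$, $B_\e\to F_B$ near $0$ is exactly what Proposition~\ref{prop2} is later used for in Section~\ref{sec:vanishes}, so assuming it here would be circular. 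Hence on the support of your profile the factor $H_A(x,\underline{A},B_\e(x))$ may be negative and the chain $-\e d_A\underline{A}''\le\tfrac m2\underline{A}\le\underline{A}\,H_A$ breaks down.

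The repair --- and this is what the paper does --- is to give up on interior positivity of $H_A$ altogether and work with the worst-case \emph{negative} rate: set $d_A\mu=-\min_{0\le s\le F_A(0)}H_A\big(1,s,F_B(1)\big)>0$ (positive because stability of $(0,F_B(x))$ on $(x_b,1]$ forces $H_A(1,0,F_B(1))<0$), and take as sub-solution the exact solution $\phi_A(x)=\alpha_\e e^{x\sqrt{\mu/\e}}+\beta_\e e^{-x\sqrt{\mu/\e}}$ of the constant-coefficient problem $-\e d_A\phi_A''=-d_A\mu\,\phi_A$ with the Robin conditions~\eqref{ap:BoundConditions3}. The monotonicity of $H_A$ in $x$ and $B$ then gives $-d_A\mu\le H_A(x,s,B_\e(x))$ for every $s\in[0,F_A(0)]$, so $\phi_A$ is a sub-solution on all of $[0,1]$ (no truncation or interior matching at $x=\eta$ is needed), and the Robin condition at $x=0$ yields $\phi_A(0)\to F_A(0)/(1+\sqrt{\mu})>0$ as $\e\to0$, which is the uniform $\delta_A$. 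In other words, the positivity of the sub-solution at $x=0$ is sustained by the boundary influx $F_A(0)$ fighting exponential decay at rate $\sqrt{\mu/\e}$, not by growth from $H_A>0$; that is the ingredient your construction is missing, and once it is substituted your comparison and strong-maximum-principle conclusions go through as you describe.
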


\begin{proof}The proof consists in finding a strictly positive sub-solution for
\begin{equation}\label{auxiliar}
 -\varepsilon d_A \frac{d^2}{d x^2}\phi_A= H_A\big(x,\phi_A(x),B_\varepsilon(x)\big) \phi_A,
\end{equation}
i.e., the equation for $A_\e$ when $B_\e$ is fixed. To this purpose, we analyze a completely solvable linear problem related to~\eqref{auxiliar}, whose solution constitutes a sub-solution of~\eqref{auxiliar} and is defined and strictly positive up to the boundary. This solution can thus be used to find a lower bound for $A_\e(0)$.

Consider the following linear equation
\begin{equation}\label{ap:linearA}
 -\varepsilon d_A \frac{d^2}{d x^2}\phi_A=\bigg[\min_{0\leq s\leq F_A(0)}H_A\big(1,s,F_B(1)\big)\bigg] \phi_A
\end{equation}
with boundary conditions inherited from~\eqref{eq:BoundaryConditions}:
\begin{equation}\label{ap:BoundConditions3}
 \phi_A(0)-\sqrt{\varepsilon}\,\frac{d}{d x}\phi_A(0)=F_A(0), \qquad
  \phi_A(1)+ \sqrt{\varepsilon}\,\frac{d}{d x}\phi_A(1)=0.
\end{equation}
Clearly, the solution takes the form
\begin{equation}\nonumber
 \phi_A(x)=\alpha_\e\, e^{x\sqrt{\mu/\varepsilon}}+\beta_\e\, e^{-x\sqrt{\mu/\varepsilon}},\quad \text{with}\quad d_A\mu=-\big[\min_{0\leq s\leq F_A(0)}H_A\big(1,s,F_B(1)\big)\big]>0.
\end{equation}
Using~\eqref{ap:BoundConditions3}, one can find the exact values of $\alpha_\e$ and $\beta_\e$ as a function of the system parameters
\begin{equation}\nonumber
\frac{\alpha_\e}{\beta_\e}=\frac{(\sqrt{\mu}-1)}{(\sqrt{\mu}+1)}e^{-2\sqrt{\mu/\varepsilon}}\quad\text{and}\quad\beta_\e=\frac{F_A(0)}{\sqrt{\mu}+1}\Big[1-\frac{(\sqrt{\mu}-1)^2}{(\sqrt{\mu}+1)^2}e^{-2\sqrt{\mu/\varepsilon}}\Big]^{-1}.
\end{equation}

Taking $\e\rightarrow0$, we immediately compute
\begin{equation}\nonumber
\alpha_\e\, e^{\sqrt{\mu/\e}}\rightarrow0\quad\text{and}\quad \beta_\e\rightarrow\beta:=\frac{F_A(0)}{\sqrt{\mu}+1},
\end{equation}
thus, for any $\varepsilon>0$ small enough, $\phi_A$ becomes positive and
  \begin{equation}\nonumber
  0<\min_{0\leq x\leq1}\phi_A(x)\leq\max_{0\leq x\leq1}\phi_A(x)\leq |\alpha_\e|e^{\sqrt{\mu/\e}}+\beta_\e\leq F_A(0).
 \end{equation} 
Then, using that $H_A$ is decreasing in both $x$ and $B$, we obtain
\begin{equation}\nonumber
 -\varepsilon d_A \frac{d^2}{d x^2}\phi_A=\big[\min_{0\leq s\leq F_A(0)}H_A\big(1,s,F_B(1)\big)\big]\phi_A\leq H_A\big(x,\phi_A(x),B_\varepsilon(x)\big) \phi_A.
\end{equation}
Therefore, $\phi_A$ is a  sub-solution to~\eqref{auxiliar} comprised between $0$ and $F_A(0)$. Since $A_\e$ is a solution to the same problem with the same bounds and $\phi_A(0)$ is converging to $\beta$, the existence of $\delta_A>0$ follows.
\end{proof}

\section{Asymptotic analysis as $\varepsilon$ vanishes and front position}
\label{sec:vanishes}

We now consider the monotonic stationary solutions $(A_\varepsilon,B_\varepsilon)$ for $\e>0$ defined in Theorem~\ref{theo1}. Thanks to Proposition~\ref{prop2}, we know that for any $x\in[0,1]$
\begin{equation} \label{uniform_bounds}
0 < A_\varepsilon (x) \leq F_A(0)\quad \text{and} \quad 0 < B_\varepsilon (x) \leq F_B(1).
\end{equation}

We are now in a position to demonstrate the convergence of the pair $(A_\e, B_\e)$ as $\e\to 0$ towards a pair $(A_0,B_0)$ that are discontinuous at the same point $x^*$ and are characterized by point (i) of Theorem~\ref{theo1}. The proof proceeds as follows. First, using the monotonicity of $(A_\e,B_\e)$ we find the existence of $A_0$ and $B_0$, and we characterize those limits as a family of critical points of~\eqref{eq:GeneralElliptic} indexed by $x$. That characterization gives us three possibilities for the support of $A_0$. Using a WKB change of variables and the monotonicity properties of critical points (characterized by lemma~\ref{lem:Monotonicity}), we discard two of them. This allows to conclude on the existence of a unique $x^*\in[x_b,x_a]$ with the properties stated in Theorem~\ref{theo1}.
\subsection{The limit as $\e$ vanishes}

We recall that by monotonicity and $L^\infty$ bounds, the total variations of $ A_\varepsilon$ and $ B_\varepsilon$ are uniformly bounded in $\varepsilon$. Classical theory of Bounded Variation functions (see for instance~\cite[Theorem 4, p.176]{evans1991measure}) ensures that there exists a subsequence $\varepsilon_k$ and BV-functions $A_0$,  $B_0$ such that, almost everywhere and in all $L^p(0,1)$, $1\leq p < \infty$, 
\begin{equation}\label{eq:ExSolution}
 \begin{cases}
  A_{\varepsilon_k} \; \longrightarrow \; A_0,  \qquad 0 \leq A_0(x)  \leq F_A(0),  \qquad \frac{d}{dx} A_0 \leq 0, 
  \\[5pt]
  B_{\varepsilon_k} \; \longrightarrow \; B_0, \qquad  0  \leq B_0(x) \leq F_B(1), \qquad \frac{d}{dx} B_0 \geq 0,  .
 \end{cases}
\end{equation}
 Those limits satisfy, almost everywhere, 
 \begin{equation}\label{eq:ExCondition}
 \begin{cases}
  A_0 H_A\big(x,A_0(x),B_0(x)\big)=0,
  \\[5pt]
  B_0H_B\big(x,A_0(x),B_0(x)\big)=0.
 \end{cases}
\end{equation}

This means that at each point $x$, $( A_0, B_0)$ is one of the four nonnegative equilibrium points; $(0,0)$ and those three given by hypothesese~\eqref{GH1a}, \eqref{GH1c}. Because $A_0$ is decreasing, three possible scenarios arise:

\begin{itemize}
\item[(a)] There exits $x^*$ such that $(A_0(x), B_0(x)) = (F_A (x),0) $,  for $x< x^*$ and $ (A_0(x),B_0(x)) = (0,F_B (x)) $,  for $x> x^*$. 

\item[(b)] There exists $x_-^*< x_+^*$ such that  $ (A_0(x),B_0(x)) = (F_A (x),0) $,  for $x< x^*_-$, $ (A_0(x), B_0(x)) = (0,0) $,  for $x_-^*<x< x_+^*$.

\item[(c)]  There exists $x^*\geq x_b$ such that $ (A_0(x),B_0(x)) = (F_A (x),0) $,  for $x< x^*$   and $ (A_0(x),B_0(x)) =(A^*, B^*)$ for $x>x^*$ close enough to~$x^*$.
\end{itemize}

Notice that neither (a) nor (b) exclude the possibility that $A_0$ is identically zero. Indeed, at this stage, $x^*$ (or $x^*_-$) could be the origin. Our aim now is to show that only the first scenario is possible for some $x^*\in[x_b,x_a]$ proving part (i) of Theorem~\ref{theo1}.

Scenario (c) can be readily discarded. Indeed, if (c) holds, then the relationship~\eqref{inversed} would be in contradiction with the monotonicity of $A_0(x)$ in a neighborhood of $x^*$.


\subsection{WKB change of unknown}

We define $\varphi_A^\varepsilon:=-\sqrt{\varepsilon}\log(A_\varepsilon)$, which is well defined thanks to Proposition~\ref{prop2}. Furthermore,
\begin{equation}\nonumber
 \frac{d}{dx}\varphi_A^\varepsilon=-\sqrt{\varepsilon}\;\frac{\frac{d}{dx}A_\varepsilon}{A_\varepsilon}\qquad\text{and}\qquad\frac{d^2}{dx^2}\varphi_A^\varepsilon=-\sqrt{\varepsilon}\;\Big(\frac{\frac{d^2}{dx^2}A_\varepsilon}{A_\varepsilon}-\frac{|\frac{d}{dx}A_\varepsilon|^2}{A_\varepsilon^2}\Big),
\end{equation}
and we find that $\varphi_A^\varepsilon$ is solution of the eikonal equation
\begin{equation}\nonumber
 \Big|\frac{d}{dx}\varphi_A^\varepsilon\Big|^2-\sqrt{\varepsilon}\;\frac{d^2}{dx^2}\varphi_A^\varepsilon=-H_A(x,A_\varepsilon,B_\varepsilon),
\end{equation}
with
\begin{equation}\nonumber
 \frac{d}{dx}\varphi_A^\varepsilon(0)=\frac{F_A(0)}{A_\varepsilon(0)}-1, \qquad
 \frac{d}{dx}\varphi_A^\varepsilon(1)=1.
\end{equation}
The same constructions can be made for $\varphi_B^\varepsilon$. If we prove that the family $(\varphi_A^\e)$ has some regularity, then we can take let $\e$ go to 0 in $\varphi_A^\e$ and $\varphi_B^\e$. That is the object of the following:

\begin{lemme}\label{lemme3}
 The sequence $(\varphi_A^\varepsilon)$ is uniformly Lipschitz-continuous with respect to $\varepsilon$. Therefore, after extracting a subsequence, $\varphi_A^{\varepsilon_k}  \underset{ \varepsilon_k \to 0 }{\longrightarrow} \varphi_A^0$,  a Lipschitz continuous, non-decreasing  viscosity  solution of
 \begin{equation} \label{eq:viscA}
 \Big|\frac{d}{dx}\varphi_A^0\Big|^2=-H_A(x,A_0,B_0).
\end{equation}
 
The same construction for $B_\varepsilon$ provides us with a function $\varphi_B$, Lipschitz continuous, non-increasing viscosity solution of
\begin{equation} \label{eq:viscB}
 \Big|\frac{d}{dx}\varphi_B^0\Big|^2= -H_B(x,A_0,B_0).
\end{equation}
\end{lemme}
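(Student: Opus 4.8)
The plan is to obtain a uniform gradient bound by a maximum-principle argument applied directly to the eikonal equation, extract a uniformly convergent subsequence via Arzel\`a--Ascoli, and then pass to the limit in the viscous equation using the stability theory of viscosity solutions.

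First I would record the sign and monotonicity of the derivative. Since $A_\varepsilon$ is strictly positive (Proposition~\ref{prop2}) and non-increasing (Lemma~\ref{lemma2}), the function $\varphi_A^\varepsilon=-\sqrt{\varepsilon}\log A_\varepsilon$ is non-decreasing, so $p:=\frac{d}{dx}\varphi_A^\varepsilon\geq 0$. At the boundary the Robin conditions give $p(0)=F_A(0)/A_\varepsilon(0)-1$ and $p(1)=1$; since $A_\varepsilon(0)\in[\delta_A,F_A(0)]$ by Proposition~\ref{prop2}, both boundary values lie in $[0,\,F_A(0)/\delta_A-1]$, uniformly in $\varepsilon$.

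The key step is the uniform Lipschitz bound. Because $(A_\varepsilon,B_\varepsilon)$ ranges in the fixed compact box $[0,F_A(0)]\times[0,F_B(1)]$ and $H_A$ is $C^2$, there is a constant $M$, independent of $\varepsilon$, with $|H_A(x,A_\varepsilon,B_\varepsilon)|\leq M$. I would then control $\max_{[0,1]}p$ by examining where it is attained: if the maximum is interior, at a point $x_0$, then $p'(x_0)=0$, and evaluating the eikonal identity $p^2-\sqrt{\varepsilon}\,p'=-H_A(x,A_\varepsilon,B_\varepsilon)$ at $x_0$ gives $p(x_0)^2=-H_A(x_0,A_\varepsilon(x_0),B_\varepsilon(x_0))\leq M$, hence $p(x_0)\leq\sqrt{M}$; if instead the maximum is attained at an endpoint, the previous paragraph bounds it. In all cases $0\leq p\leq C$ with $C=\max\{\sqrt{M},\,F_A(0)/\delta_A-1\}$ independent of $\varepsilon$, so the family $(\varphi_A^\varepsilon)$ is uniformly Lipschitz. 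Arzel\`a--Ascoli then yields a subsequence $\varphi_A^{\varepsilon_k}$ converging uniformly to a Lipschitz, non-decreasing limit $\varphi_A^0$.

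It remains to pass to the limit in the viscosity sense, where the main obstacle lies. The viscous eikonal equation satisfied by $\varphi_A^\varepsilon$ is a vanishing-viscosity approximation of~\eqref{eq:viscA} with viscosity $\sqrt{\varepsilon}\to 0$, but its right-hand side $-H_A(x,A_\varepsilon,B_\varepsilon)$ converges only almost everywhere, not uniformly, because $A_0$ and $B_0$ jump; the standard stability theorem for viscosity solutions requires locally uniform convergence of the right-hand side. I would recover this away from the (finitely many) jump points by invoking the elementary fact that monotone functions converging pointwise to a continuous limit converge uniformly on compact sets (Dini/P\'olya): on every compact subinterval avoiding the discontinuities, $A_\varepsilon\to A_0$ and $B_\varepsilon\to B_0$ uniformly, so $H_A(x,A_\varepsilon,B_\varepsilon)\to H_A(x,A_0,B_0)$ uniformly, and the classical vanishing-viscosity stability result identifies $\varphi_A^0$ as a viscosity solution of~\eqref{eq:viscA} there. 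A removable-singularity argument, or directly checking the viscosity inequalities with test functions touching $\varphi_A^0$ at the isolated jump, extends the property across that point, giving~\eqref{eq:viscA} on all of $(0,1)$. The identical reasoning applied to $\varphi_B^\varepsilon=-\sqrt{\varepsilon}\log B_\varepsilon$, which is now non-increasing, furnishes the Lipschitz, non-increasing viscosity solution of~\eqref{eq:viscB}.
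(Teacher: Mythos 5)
Your proof is correct, and its core is the same as the paper's: the uniform Lipschitz bound is obtained exactly as in the paper, by locating the maximum of $\frac{d}{dx}\varphi_A^\varepsilon$ — if interior, the second-order term drops from the eikonal identity and $|H_A|\leq M$ on the compact box bounds it; if at $x=0$, Proposition~\ref{prop2} bounds $F_A(0)/A_\varepsilon(0)-1$; if at $x=1$, the Robin condition gives the value $1$ — followed by Arzel\`a--Ascoli. Where you genuinely diverge is the final limit passage, which is also the only delicate point. The paper simply invokes the ``usual theory of viscosity solutions'' with a remark that the procedure controls the limsup/liminf of the right-hand sides (i.e.\ the half-relaxed-limits method of Barles--Perthame, which tolerates non-uniform convergence of the Hamiltonian), and asserts this is enough for the intended use. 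You instead restore \emph{locally uniform} convergence of the right-hand side away from the jump set, using the fact that monotone functions converging pointwise (a.e.\ suffices, by density) to a continuous limit converge uniformly on compact subintervals, and then apply the standard stability theorem. This is a legitimate and more self-contained alternative, and it matches exactly how the lemma is later used in the paper (the viscosity relations \eqref{eq:viscA}--\eqref{eq:viscB} are only ever evaluated on open intervals where $(A_0,B_0)$ equals a continuous equilibrium, so your ``extension across the jump'' step is not even needed); note that the finiteness of the jump set you rely on is supplied by the scenario classification (a)/(b) established just before the lemma, so your argument is sound in context.
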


\begin{proof}

Since $A_\varepsilon\geq0$ and $\frac{d}{dx} A_\varepsilon \leq 0$ we get directly that $\frac{d}{dx}\varphi^\varepsilon_A\geq0$. We are going to prove that there exists $C_{\e_0}$, independent of $\varepsilon$, such that 
$$
0\leq\sup_{x\in[0,1]}\frac{d}{dx}\varphi_A^\varepsilon(x)\leq C_{\e_0}.
$$ 

Consider $y$, one argmax of $\frac{d}{dx}\varphi_A^\varepsilon(y)$. If $0<y<1$, then $\frac{d^2}{dx^2}\varphi_A^\varepsilon(x)=0$ and
 \begin{equation}\nonumber
  \Big|\frac{d}{dx}\varphi_A^\varepsilon\Big|^2=-H_A(x,A_\varepsilon,B_\varepsilon),
 \end{equation}
which is uniformly upper-bounded because $H_A$ is continuous and evaluated on $(0,1)\times(0,F_A(0))\times(0,F_B(1))$. The upper bound follows. 

If $y=0$, Proposition~\ref{prop2} tells us that $A_\varepsilon(0)$ is bounded from below by some positive constant $\delta_A$  independent from  $\e$. Then, we may conclude again because 
\begin{equation}\nonumber
 \frac{d}{dx}\varphi_A^\varepsilon(0)=\frac{F_A(0)}{A_\varepsilon(0)}-1\leq \frac{F_A(0)}{\delta_A}-1<\infty.
\end{equation}
If $y=1$, we immediately conclude thanks to the boundary condition and thus, we have proved the uniform Lipschitz estimate. 
\\

The Ascoli-Arzela theorem allows us to take a subsequence of $\varphi_A^\varepsilon$ which converges uniformly and we conclude thanks to the usual theory of viscosity solutions \cite{C.I.L:92, GB:94}. Note that the viscosity procedure only allows to control the limsup or liminf of the right hand sides of \eqref{eq:viscA}, \eqref{eq:viscB}, and this information sufficient for the conclusion we want to draw. 
\end{proof}

A direct consequence of Lemma~\ref{lemme3} is that scenario~(b) cannot hold. Indeed, in that case,
\begin{equation}\nonumber
 \Big|\frac{d}{dx}\varphi_A^0\Big|^2=-H_A(x,0,0)<0,\qquad\forall x\in(x_-^*,x_+^*)
\end{equation}
which is contradictory. 

The only possible scenario is therefore (a). In order to conclude the proof, we are left showing that $x^*\in[x_b,x_a]$. It suffices to show that $A_0(x)$ becomes positive when $x\rightarrow0$ and the same with $B_0(x)$ when $x\rightarrow1$.
\begin{lemme}\label{prop1}
 There exists two non empty intervals, namely $I_b$ and $I_a=[0,1]\setminus I_b$, such that $B_0\equiv0$ in $I_b$ and $A_0\equiv0$ in $I_a$. Moreover, 
 $$
 [0,x_b)\subset I_b\quad\text{ and }\quad(x_a,1]\subset I_a.
 $$
\end{lemme}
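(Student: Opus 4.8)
The plan is to prove Lemma~\ref{prop1} by establishing the two endpoint behaviors separately, since by symmetry the argument for $A_0$ near $x=0$ and for $B_0$ near $x=1$ are identical. I focus on showing that $A_0(x) > 0$ for $x$ in a neighborhood of $0$; this forces $B_0 \equiv 0$ there (by scenario (a), already established), so that $[0,\delta) \subset I_b$ for some $\delta>0$, and the analogous statement gives $(1-\delta',1]\subset I_a$. The inclusions $[0,x_b)\subset I_b$ and $(x_a,1]\subset I_a$ will then follow once I locate the transition point $x^*$ using the sign assumptions~\eqref{GH1b}.

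First I would exploit Proposition~\ref{prop2} directly: it already asserts that $A_\e(0)$ is bounded below by $\delta_A>0$ uniformly in $\e$, and since $A_\e$ is decreasing in $x$ and converges a.e.\ to $A_0$, the limit $A_0$ inherits a positive left endpoint value. More precisely, monotonicity gives $A_\e(x)\geq A_\e(0^+)$ control near the boundary, and I would combine this with the sub-solution $\phi_A$ constructed in Proposition~\ref{prop2}, which is positive and bounded below up to the boundary. Passing to the limit, $A_0(0^+)\geq \beta > 0$, so $A_0$ is strictly positive on a right-neighborhood of $0$. By scenario (a), wherever $A_0>0$ we must be in the branch $(F_A(x),0)$, hence $B_0\equiv 0$ there; this yields the nonempty interval $I_b\supset[0,\delta)$. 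The symmetric argument using the lower bound on $B_\e(1)$ produces $I_a \supset (1-\delta',1]$.

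To upgrade these to the sharp inclusions $[0,x_b)\subset I_b$ and $(x_a,1]\subset I_a$, I would argue by contradiction using the stability/instability structure encoded in~\eqref{GH1b} together with the eikonal information from Lemma~\ref{lemme3}. Suppose the support of $A_0$ ended at some $x^* < x_b$. On the $B$-branch $(0,F_B(x))$ immediately to the right, Lemma~\ref{lemme3} forces $|\tfrac{d}{dx}\varphi_A^0|^2 = -H_A(x,0,F_B(x))$, which requires $H_A(x,0,F_B(x))\leq 0$. But assumption~\eqref{GH1b} states $H_A(x,0,F_B(x))>0$ for $x<x_b$, a contradiction; hence the transition cannot occur strictly before $x_b$, giving $[0,x_b)\subset I_b$. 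The mirror computation with $\varphi_B^0$ and the first inequality of~\eqref{GH1b}, namely $H_B(x,F_A(x),0)>0$ for $x>x_a$, shows the $B$-support cannot begin after $x_a$, yielding $(x_a,1]\subset I_a$.

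The main obstacle I anticipate is the rigorous passage to the limit in the eikonal constraint at the free boundary: Lemma~\ref{lemme3} only controls the viscosity solution and hence only the $\limsup$ or $\liminf$ of the right-hand sides $-H_A(x,A_0,B_0)$, so I must be careful that the sign contradiction is drawn at an interior point of the putative $B$-branch where $(A_0,B_0)=(0,F_B(x))$ holds on a genuine interval, not merely at the single transition point. Working slightly to the right of $x^*$, where the branch identification is stable and $H_A$ is continuous, should provide the needed open set on which $-H_A<0$ is incompatible with $|\tfrac{d}{dx}\varphi_A^0|^2\geq 0$. Establishing that the limits $(A_0,B_0)$ indeed equal the named branch values on an open interval (rather than oscillating between equilibria) relies on the monotonicity in~\eqref{eq:ExSolution} and the exclusion of scenarios (b) and (c) already carried out, so I would invoke those to pin down the branch before applying the eikonal sign argument.
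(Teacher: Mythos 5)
Your second step---the eikonal contradiction---is exactly the paper's proof: assume the transition point lies at some $x^*<x_b$ (resp.\ that the support of $A_0$ extends past $x_a$); then on a genuine open interval where $(A_0,B_0)=(0,F_B(x))$ (resp.\ $(F_A(x),0)$), Lemma~\ref{lemme3} forces $\bigl|\frac{d}{dx}\varphi_A^0\bigr|^2=-H_A(x,0,F_B(x))$ (resp.\ $\bigl|\frac{d}{dx}\varphi_B^0\bigr|^2=-H_B(x,F_A(x),0)$), and assumption~\eqref{GH1b} makes the right-hand side strictly negative, which is impossible. Your caution about drawing the sign contradiction on an interval of the branch rather than at the single transition point, using monotonicity and the prior exclusion of scenarios (b) and (c), is precisely how the paper uses scenario (a). So the essential content of your argument agrees with the paper's.

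Your first step, however, is wrong---though harmlessly so, because it is redundant. Proposition~\ref{prop2} cannot yield positivity of $A_0$ on a neighborhood of $0$: the sub-solution constructed there, $\phi_A(x)=\alpha_\e e^{x\sqrt{\mu/\e}}+\beta_\e e^{-x\sqrt{\mu/\e}}$, is a boundary layer of width $O(\sqrt{\e})$, so for every fixed $x>0$ one has $\phi_A(x)\to0$ as $\e\to0$; it bounds $A_\e$ from below only at the endpoint $x=0$. Monotonicity also runs the wrong way: $A_\e$ is decreasing, so $A_\e(0)\ge\delta_A$ gives the upper bound $A_\e(x)\le A_\e(0)$ for $x>0$, not a lower bound, and the a.e.\ limit $A_0$ retains no memory of the single boundary value. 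Indeed, before the eikonal argument is run, nothing excludes $x^*=0$ and $A_0\equiv0$; the paper states this explicitly after listing scenarios (a)--(c). The correct source of non-emptiness of $I_b$ and $I_a$ is your second step itself: the inclusions $[0,x_b)\subset I_b$ and $(x_a,1]\subset I_a$, combined with $0<x_b<x_a<1$ and scenario (a), already give $A_0=F_A>0$ on $[0,x_b)$ and $B_0=F_B>0$ on $(x_a,1]$. You should simply delete the first step and let the contradiction argument carry the whole lemma, as the paper does.
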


\begin{proof}
 Let us assume that there exists $y\in(x_a,1)$ such that $A_0(y)>0$.  We have shown that we are necessarily in scenario (a), which implies that $B_0(y)=0$ and by monotonicity 
 $$
 B_0(x)=0,\quad A_0(x)= F_A(x)\qquad \text{for }\;  0\leq x\leq y.
 $$

Using the fact that $(F_A(x),0)$ is linearly unstable for $x\in (x_a,1]$ and that $\varphi_B^0$ is a viscosity solution of~\eqref{eq:viscB}, we have
\begin{equation}\nonumber
 \Big|\frac{d}{dx}\varphi_B^0\Big|^2=-H_B(y,F_A(y),0)<0,
\end{equation}
 which is impossible, hence $A_0 \equiv 0$ on $(x_a,1)$. The same argument ensures us that $B_0\equiv0$ in $(0,x_b)$. One can therefore define the intervals $I_a$ and $I_b$ by maximality as the supports of $A_0$ and $B_0$.
\end{proof}

\section{Characterization of the Front}
\label{sec:characterization}

Now that we have proved the existence of a boundary $x^*$, we can turn to the characterization of this point. To this purpose, we start defining the point, $x^*_\e$ such that
$$
A_\e(x^*_\e) = B_\e (x^*_\e),
$$
which, by monotonicity, is unique. We also know, by compactness and unique limit, that  $x^*_\e  \rightarrow x^*$ when $\e\rightarrow0$.

We perform the change of variables $y=(x-x^*_\e)/\sqrt{\varepsilon},$ and define $a_\varepsilon(y)=A_\e (x^*_\e+\sqrt{\varepsilon}y)$ and $b_\varepsilon$ in the same way. System~\eqref{eq:Stationary} becomes
\begin{equation}\nonumber
\begin{cases}
-d_A\frac{d^2}{dy^2}a_\varepsilon(y) = a_\varepsilon(y)H_A\big(x^*_\e+\sqrt\varepsilon y,a_\varepsilon(y),b_\varepsilon(y)\big),
 \\[5pt]
-d_B\frac{d^2}{dy^2}b_\varepsilon(y) = b_\varepsilon(y)H_B\big(x^*_\e+\sqrt\varepsilon y,a_\e(y),b_\varepsilon(y)\big),
 \\[5pt]
 a_\varepsilon (0) = b_\varepsilon(0).
\end{cases}
\end{equation}

Because $a_\varepsilon$ and $b_\varepsilon$ are uniformly  bounded, by elliptic regularity they are uniformly bounded in  $C^2$ and, after extraction of a subsequence (by uniqueness, as we will show, in fact the full sequence converges), we may 
take the limit as $\e\to 0$ (which we know is well defined, bounded, Lipschitz-continuous).  We find that this limit, denoted $(a_0,\; b_0)$, is solution of
\begin{equation}\label{eq:Traveling0}
\begin{cases}
-d_A\frac{d^2}{dy^2}a_0(y) = a_0(y)H_A\big(x^*,a_0(y),b_0(y)\big),\qquad \partial_ya_0(y) \leq 0, 
 \\[5pt]
-d_B\frac{d^2}{dy^2}b_0(y) = b_0(y)H_B\big(x^*,a_0(y),b_0(y)\big), \qquad \partial_yb_0(y) \geq 0,
 \\[5pt]
 a_0(0) =b_0(0).
\end{cases}
\end{equation}

This solution is characterized as follows:
\begin{theo}\label{th:xstar} The limits satisfy $ a_0 \neq 0$, $b_0 \neq 0$  and there exists a unique value $x^*$ such that the system \eqref{eq:Traveling0} has a non-trivial solution. This solution is the unique traveling wave defined as 
\begin{equation}\label{eq:TravWave}
\begin{cases}
 -c(x)\frac{\partial}{\partial y}a(y;x)-d_A\frac{\partial^2}{\partial y^2}a(y;x) = a(y;x)H_A\big(x,a(y;x),b(y;x)\big), \qquad y \in \R, 
 \\[5pt]
 -c(x)\frac{\partial}{\partial y}b(y;x)-d_B\frac{\partial^2}{\partial y^2}b(y;x) = b(y;x)H_B\big(x,a(y;x),b(y;x)\big),
 \\[5pt]
  \displaystyle \lim_{y\rightarrow-\infty}a(y;x)=F_ A(x),\qquad\lim_{y\rightarrow+\infty}a(y;x)=0,
\\[5pt]
   \displaystyle  \lim_{y\rightarrow+\infty}b(y;x)=F_ B(x),\qquad\lim_{y\rightarrow-\infty}b(y;x)=0, 
\end{cases}
\end{equation}
with speed zero, that is $c(x^*)=0$,  and connecting $(F_A(x^*),0)$ to $(0,F_B(x^*))$.
\end{theo}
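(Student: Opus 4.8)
The plan is to prove the statement in three stages: (i) identify the blow-up limit $(a_0,b_0)$ of \eqref{eq:Traveling0} as a genuine standing wave connecting the two semi-trivial states at $x=x^*$; (ii) invoke the classical theory of bistable competition waves to produce, for each frozen parameter $x\in[x_b,x_a]$, a unique speed $c(x)$ and profile solving \eqref{eq:TravWave}; and (iii) show that $x\mapsto c(x)$ is continuous and strictly monotone, so that $c$ vanishes at exactly one point, necessarily the boundary $x^*$ of Theorem~\ref{theo1}.

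For stage (i) I would first record that, being monotone and bounded with $C^2$ regularity (from elliptic estimates on the rescaled system), $a_0$ and $b_0$ have limits as $y\to\pm\infty$ that are equilibria of the frozen system at $x=x^*$; matching the inner profile with the outer limit $(A_0,B_0)$ of scenario~(a) --- namely $(F_A(x),0)$ for $x<x^*$ and $(0,F_B(x))$ for $x>x^*$ --- pins these limits down to $(F_A(x^*),0)$ at $-\infty$ and $(0,F_B(x^*))$ at $+\infty$. In particular $a_0\not\equiv0$ and $b_0\not\equiv0$. The common value $a_0(0)=b_0(0)$ is strictly positive: if it vanished, monotonicity and nonnegativity would force $a_0\equiv0$ on $[0,\infty)$, hence $a_0'(0)=0$, and Cauchy uniqueness for $-d_Aa_0''=a_0H_A(x^*,a_0,b_0)$ would give $a_0\equiv0$, contradicting $a_0(-\infty)=F_A(x^*)>0$. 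Thus $(a_0,b_0)$ is exactly a solution of \eqref{eq:TravWave} at $x=x^*$ with $c=0$.

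For stage (ii), note that under \eqref{GH1a}--\eqref{GH1c} the frozen system is, for each $x\in(x_b,x_a)$, a bistable competition system: both semi-trivial states are linearly stable while $(A^*(x),B^*(x))$ is a saddle. After the monotonizing substitution $\mathcal B=F_B(x)-b$ used in Section~\ref{app:A}, the system becomes cooperative, the comparison principle applies, and the standard results on monotone bistable fronts \cite{VVV} yield a unique wave speed $c(x)$ together with a strictly monotone profile, unique up to translation; the normalization $a(0;x)=b(0;x)$ selects one representative, since $a-b$ is strictly decreasing from $F_A(x)$ to $-F_B(x)$. Continuity of $c(x)$ follows from the uniform $C^2$ bounds and uniqueness of the wave (continuous dependence on the parameter).

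The crux, and the step I expect to be the main obstacle, is the strict monotonicity of $c(\cdot)$ in stage (iii). Here I would use the sign conditions $\partial_xH_A<0$ and $\partial_xH_B>0$ of \eqref{GH2}: for $x_1<x_2$, increasing the parameter weakens $H_A$ and strengthens $H_B$, so the profile at $x_1$ becomes a strict super/sub-solution of the wave equation at $x_2$ in the cooperative formulation. A sliding comparison in the moving frame then orders the speeds, with $c$ strictly decreasing. The delicate points are the translation invariance of the wave --- one must compare suitably shifted profiles and use the strong maximum principle to rule out equality of speeds --- and the behavior as $y\to\pm\infty$, where the profiles approach $x$-dependent states and one needs the exponential decay estimates near the stable equilibria to make the comparison legitimate. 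Granting strict monotonicity, $c$ has at most one zero; the standing wave produced in stage~(i) shows $c(x^*)=0$, so $x^*\in[x_b,x_a]$ is the unique zero, which both characterizes the boundary and, by uniqueness of the limiting wave, upgrades the subsequential convergence to convergence of the entire family $(A_\e,B_\e)$. This completes the proof of Theorem~\ref{th:xstar} and hence point~(ii) of Theorem~\ref{theo1}.
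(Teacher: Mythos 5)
Your stage (i) contains the central gap: you assert that ``matching the inner profile with the outer limit $(A_0,B_0)$ of scenario (a) pins these limits down'' to $(F_A(x^*),0)$ at $-\infty$ and $(0,F_B(x^*))$ at $+\infty$, but this matching is precisely what has to be proved, and it is where the paper spends two of its three steps. The iterated limits do not commute a priori: $a_0(y)=\lim_{\varepsilon\to0}A_\varepsilon(x^*_\varepsilon+\sqrt{\varepsilon}\,y)$ followed by $y\to-\infty$ is not the same as $\lim_{x\uparrow x^*}A_0(x)$, because the transition of $A_\varepsilon$ from $F_A$ down to $0$ could occur on an intermediate scale between $\sqrt{\varepsilon}$ and $1$. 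Monotonicity only yields the one-sided bound $a_0(y)\leq F_A(x^*)$, so two degenerate scenarios survive your argument: (a) the rescaled profiles collapse, $(a_0,b_0)\equiv(0,0)$; (b) the limit of $(a_0,b_0)$ at $-\infty$ (resp. $+\infty$) is the saddle $(A^*(x^*),B^*(x^*))$ rather than a semi-trivial state. Your positivity argument for $a_0(0)=b_0(0)$ is then circular, since it invokes $a_0(-\infty)=F_A(x^*)>0$, i.e.\ the very conclusion of the unproved matching. The paper closes these holes by hard estimates: scenario (a) is excluded by dividing the $a_\varepsilon$-equation by $a_\varepsilon$, integrating over $(y^-,0)$, and controlling the boundary term by the uniform Lipschitz bound on $\varphi_A^\varepsilon$ from Lemma~\ref{lemme3} (this is where the WKB analysis is genuinely used), giving the uniform bound \eqref{eq:UniformIntegrability}; if $(a_0,b_0)\equiv(0,0)$ the integrand is the positive constant $H_A(x^*,0,0)$ and the bound fails as $y^-\to-\infty$. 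Scenario (b) is excluded by linearizing around the saddle, where the sign structure of $(\epsilon_A,\epsilon_B)$ and their derivatives forces $\partial_AH_A\,\partial_BH_B-\partial_BH_A\,\partial_AH_B\geq0$, contradicting \eqref{GH1c}. Any correct proof must contain substitutes for these two arguments; your proposal has none.

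On stages (ii)--(iii): your route to monotonicity of $c(\cdot)$ (sliding comparison of shifted profiles in the cooperative variables, using $\partial_xH_A<0$, $\partial_xH_B>0$) is genuinely different from the paper's, which differentiates the wave system in $x$ and integrates against a signed solution of the dual linearized problem furnished by Krein--Rutman theory, obtaining the identity $0=c^\prime(x)\int(w_a\Phi_a+w_b\Phi_b)\,dy+\int(a\Phi_a\,\partial_xH_A+b\Phi_b\,\partial_xH_B)\,dy$ and hence $c^\prime<0$ by inspection of signs. Your comparison approach is a legitimate alternative (and avoids assuming smooth dependence of the wave family on $x$, which the paper's computation implicitly requires), while the paper's duality identity is shorter once the signed dual solution is granted. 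But this alternative, even if carried out carefully, cannot repair the gap in stage (i).
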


\begin{proof}
The proof is split into three steps. First we show that functions $a_\e$ and $b_\e$ cannot converge both at the same time to the zero function. Then, using that $a_0$ and $b_0$ converge at $-\infty$ to solutions of~\eqref{eq:ExCondition}, we show that limit conditions of~\eqref{eq:TravWave} are satisfied. Finally, thanks to a monotonicity argument on the speed $c(x)$, we show that $(a_0,b_0)$ are in fact the unique traveling wave solutions of~\eqref{eq:TravWave} such that $c(\cdot)=0$.

\noindent {\it 1st step.} 
The pair $(a_\varepsilon,b_\varepsilon)$ does not converge to the zero function.

Indeed,  for any interval $(y^-,0)$ with $y^- <0$, integrating by parts the equation on $a_\varepsilon$ after dividing it by $a_\varepsilon$, we compute 
 \begin{eqnarray*}
  \frac{1}{d_A}\int^{0}_{y^-}H_A\big(x^*_\e+\sqrt\varepsilon y,a_\varepsilon(y),b_\varepsilon(y)\big)\,dy
&=&-\int^{0}_{y^-} \frac{|\frac{d}{dy}a_\varepsilon|^2}{a_\varepsilon^2}\,dy-\Bigg[\frac{\frac{d}{dy}a_\varepsilon}{a_\varepsilon}\Bigg]^{0}_{y^-}\leq-\frac{\frac{d}{dy}a_\varepsilon(0)}{a_\varepsilon(0)}.
\end{eqnarray*}
Moreover, Lemma~\ref{lemme3} tells us that, for $\e<\e_0$,
$$
\frac{d}{dx}\varphi^{\e}_A = -\sqrt \e \;  \frac1{A_\e}\frac{d}{dx}A_\e<C_{\e_0}.
$$   
This implies directly that for any $y\in\R$
$$-\frac{\frac{d}{dy}a_\varepsilon(y)}{a_\varepsilon(y)}
=-\frac1{A_\e (x^*_\e+\sqrt{\varepsilon}y)}\frac{d}{dy}A_\e (x^*_\e+\sqrt{\varepsilon}y)
=-\frac{\sqrt\e\frac{dA_\e}{dx} (x^*_\e+\sqrt{\varepsilon}y)}{A_\e (x^*_\e+\sqrt{\varepsilon}y)}
\leq C_{\e_0} .
$$

Taking the limit $\e\rightarrow0$, using the continuity of $H_A$ and that $(a_\e,b_\e)\rightarrow(a_0,b_0)$ uniformly, we find 
\begin{equation} \label{eq:UniformIntegrability}
  \frac{1}{d_A}\int^{0}_{y^-}H_A\big(x^*,a_0(y),b_0(y)\big)\,dy\;\leq\;C_{\e_0} .
\end{equation}
If $(a_0,b_0) \equiv (0,0)$, then the left hand  side becomes $|y^-|H_ A(x^*,0,0)/d_A$ which goes to $\infty$ when $y^-\rightarrow-\infty$. Therefore, one of them, say $a_0$ is positive in some interval and by the strong maximum principle, $a_0(y) >0$ for any $y\in\R$. By the condition $a_0(0)=b_0(0)$, then $b_0$ is also positive. 
\\

\noindent {\it 2nd step.} 
The pair $(a_0,b_0)$ satisfies the conditions at infinity in~\eqref{eq:TravWave}.
  
We treat for instance the limit at $-\infty$. Again by elliptic regularity and thanks to \eqref{eq:UniformIntegrability}, $\frac{d^2}{d y^2}a_0(y)$ and $\frac{d^2}{d y^2}b_0(y)$ vanish at $- \infty$. Therefore the limits of $a_0$ and $b_0$ are steady state solutions with  $a_0(-\infty) > b_0(-\infty) $.

The case when this steady state is  $(A^*(x^*), B^*(x^*))$ is discarded by stability hypothesis~\eqref{GH1c} and saturation hypothesis~\eqref{GH2}. Indeed, we can rewrite the system defining $\epsilon_A= A(x^*)-a_0$ and $\epsilon_B=b_0-B(x^*)$. These functions are always positive and have non negative derivatives. Moreover, both they and their first derivatives, go to zero when $y\rightarrow-\infty$. We can write
\begin{equation}\nonumber
 \frac{d^2}{d y^2}\begin{pmatrix}\epsilon_A(y)\\ \epsilon_B(y)\end{pmatrix}\approx\begin{pmatrix} -\partial_AH_A/d_A & \partial_BH_A/d_A\\\partial_AH_B/d_B & -\partial_BH_B/d_B\end{pmatrix}\begin{pmatrix}\epsilon_A(y)\\ \epsilon_B(y)\end{pmatrix},
\end{equation}
where the matrix is evaluated at $(x^*,A^*(x^*),B^*(x^*))$ and we have neglected the terms of the type $\epsilon_A^2,\epsilon_B^2$ and $\epsilon_A\epsilon_B$ (which do not play a role in the analysis of the signs when $y\rightarrow-\infty$). Integrating between $-\infty$ and any value $y\ll -1$ we get
\begin{equation}\nonumber
 0\leq\frac{d}{d y}\begin{pmatrix}\epsilon_A(y)\\ \epsilon_B(y)\end{pmatrix}\approx\begin{pmatrix} -\partial_AH_A/d_A & \partial_BH_A/d_A\\\partial_AH_B/d_B & -\partial_BH_B/d_B\end{pmatrix}\begin{pmatrix}\int_{-\infty}^y\epsilon_A\\ \int_{-\infty}^y\epsilon_B\end{pmatrix},
\end{equation}
which is only possible when $\partial_AH_A\;\partial_BH_B-\partial_BH_A\;\partial_AH_B\geq0$ contradicting the saddle characterization of $(A^*(x),B^*(x))$.


\noindent {\it 3rd step.} Finally because the system is competitive, the positive solutions are unique and, in the case at hand, traveling waves with speed $0$.  We recall why the speed $c(\cdot)$ is monotonic. Considering the derivatives $w_a(y)=\frac{\partial}{\partial y}a(y)<0$, $w_b(y)=\frac{\partial}{\partial y}b(y)>0$ they satisfy
\begin{equation}\nonumber
 \begin{cases}
 -c(x)\frac{\partial}{\partial y}w_a(y;x)-d_A\frac{\partial^2}{\partial y^2}w_a(y;x) = M_{11}w_a+M_{12}w_b,
 \\[5pt]
 -c(x)\frac{\partial}{\partial y}w_b(y;x)-d_B\frac{\partial^2}{\partial y^2}w_b(y;x) = M_{21}w_a+M_{22}w_b.
 \end{cases}
\end{equation}
The signs $M_{12}:=\partial_BH_A<0$ and $M_{21}:=\partial_A H_B<0$ are compatible with the Krein-Rutman theory, and by consequence the dual problem has a signed solution
\begin{equation}\nonumber
 \begin{cases}
 c(x)\frac{\partial}{\partial y}\Phi_a(y;x)-d_A\frac{\partial^2}{\partial y^2}\Phi_a(y;x) = M_{11}\Phi_a+M_{21}\Phi_b,\qquad \Phi_a>0
 \\[5pt]
 c(x)\frac{\partial}{\partial y}\Phi_b(y;x)-d_B\frac{\partial^2}{\partial y^2}\Phi_b(y;x) = M_{12}\Phi_a+M_{22}\Phi_b,\qquad \Phi_b<0.
 \end{cases}
\end{equation}

We now consider the $x-$derivative: $z_a(y)=\frac{\partial }{\partial x} w_a(y;x)$ and $z_b(y)=\frac{\partial}{\partial x} w_b(y;x)$ satisfying
\begin{equation}\nonumber
 \begin{cases}
 -c(x)\frac{\partial}{\partial y}z_a(y;x)-d_A\frac{\partial^2}{\partial y^2}z_a(y;x) = M_{11}z_a+M_{12}z_b+c^\prime(x)w_a+a\;\partial_xH_ A,
 \\[5pt]
 -c(x)\frac{\partial}{\partial y}z_b(y;x)-d_B\frac{\partial^2}{\partial y^2}z_b(y;x) = M_{21}z_a+M_{22}z_b+c^\prime(x)w_b+b\;\partial_xH_B.
 \end{cases}
\end{equation}

Integrate in $y$ against the test function $\Phi$ and add the two lines, it remains
\begin{equation}\nonumber
 0=c^\prime(x)\int\underbrace{[w_a\Phi_a+w_b\Phi_b]}_{<0}\,dy+\int \overbrace{[a\Phi_a\;\partial_xH_ A+b\Phi_b\;\partial_xH_B]}^{<0}\,dy,\qquad 0<x<1,
\end{equation}
thus $c^\prime<0$. The uniqueness of $x^*$ follows directly.
\end{proof}

This result concludes the proof of theorem~\ref{theo1}. We now use this result on a simple model of differentiating neuronal tissue. 

\section{Application}\label{sec:application}

\subsection{Model}\label{sec:model}
As discussed in~\cite{quininao-prochiantz}, a classical illustration of neurodevelopment is provided by the compartmentalization of the neural tube in response to the diffusion of the ventral and dorsal morphogens Sonic Hedgehog (SHH) and Bone Morphogenetic Protein (BMP), respectively~\cite{ribes-etal:10,ulloa2007morphogens}. In this system, a continuous gradient activates ventral and dorsal genes, transcription factors are reciprocal inhibitor and self-activitor and diffuse through boundaries. This is well-known to result in the clear definition of territories that express distinct transcription factors subsets~\cite{ashe-briscoe:06,dessaud-etal:10,dessaud-etal:07,kiecker-lumsden:05}. 

We analyze a simplified version of the model proposed in~\cite{quininao-prochiantz}, which includes:
\begin{itemize}
	\item Epigenetic phenomena: the more a specie has been expressed, the more it is likely to be expressed. This phenomenon scales the production rate with a coefficient $\alpha_i(A,B)$. 
	\item The presence of morphogens with a graded concentration along the neural tissue, $F_i(x)$, $i\in \{A,B\}$,
	\item The self-activation of transcription factors
	\item and the saturation effects, limiting the production rate of each species proportionally to the total concentration within a cell.
	\item Eventually, diffusion of homeoproteins will be considered, through a small diffusion parameter $\e \ll 1$. 
\end{itemize}
We will show that these four mechanisms regulating the gene expression (response to gradients, self-activation, reciprocal inhibition and saturation) precisely correspond to our theoretical assumptions. Assuming that the number of cells is large, we consider a space-continuous description of the system, and we denote by $A(x)$ and $B(x)$ the concentrations of transcription factors at location $x$ on the neural tissue. The system described above readily translates into the system of parabolic equations:
\[
 \partial_t A -\varepsilon d_A\Delta A = \alpha_A A\big( F_A(x) + A) -\beta_A A (A+B), \qquad 0 < x <1, 
\]
and a 
similar equation for $B$. In this equation, we considered epigenetic phenomena to have linear effects: $\alpha_A(A,B)=\alpha_A A$. Therefore, the term $\alpha_A A$ is the transcriptional intensity, $\beta_A$ is the saturation parameter, and we assume $0<\alpha_A<\beta_A$ because saturation will overcome necessarily the self-activation. The parameter $d_A$ incorporates the relative level of diffusion of the parameter $A$ compared to that of $B$ (at least one of these constants can be incorporated in the $\e$). We shall assume that the system is subject to Robin type boundary conditions~\eqref{eq:BoundaryConditions}. 

It is not hard to rescale the system so as to write the stationary solutions in the form:
\begin{equation}\label{eq:Stationary}
\begin{cases}
 -\varepsilon d_A\Delta A = A\big(F_A(x)-A-s_{A}B\big), \qquad 0<x<1, 
 \\
 -\varepsilon d_B\Delta B = B\big(F_B(x)-B-s_{B}A\big),
\end{cases}
\end{equation}
where, for simplicity of notation, we use the same terms $F_ A (x)$ and $F_ B(x)$ to represent the rescaled action of external morphogen gradients. We introduce the parameters $s_i$ as positive constants taking into account the relation between $\alpha_i$ and $\beta_i$: $$s_A=\frac{\beta_A}{\beta_A-\alpha_A}>1\qquad\text{and}\qquad s_B=\frac{\beta_B}{\beta_B-\alpha_B}>1.$$

In the limit $\varepsilon$ goes to $0$, we look for a decreasing solution $A$ connecting the value $F_A(0)$ with 0.
The morphogen gradients are monotonic and smooth, assumed to be twice differentiable, defined on the closure of the domain, strictly positive and monotonic. Summarizing, there exists $\delta>0$ such that for any $0\leq x\leq 1$
\begin{equation}\label{H1}
 F_ A(x)>\delta,\quad\frac{d}{dx}F_ A(x)<0,\quad F_ B(x)>\delta,\quad\frac{d}{dx}F_ B(x)>0.
\end{equation}

We have mentioned that diffusion is extremely small. Non-trivial differentiation at these levels of diffusion would require that steady states for $\varepsilon=0$ are non-trivial as well. This is why we shall assume that:
\begin{equation}\label{H2}
\exists \; (x_a, \; x_b)\in I, \quad  \, x_b<x_a \quad \text{such that} \quad  F_A(x_b) = s_{A}F_ B(x_b),\quad F_ B(x_a) =s_{B} F_ A(x_a).
\end{equation}
A first remark is that combining assumptions~\eqref{H1} and~\eqref{H2} we get that
 \begin{equation}\label{eq:rmq1}
 \begin{cases}
 F_B(x)< F_ B(x_a) = s_{B}F_ A(x_a)< s_{B}F_ A(x),\quad \text{for } x\in [0,x_a),
 \\
 F_ A(x)< F_A(x_b) = s_{A}F_ B(x_b)< s_{A}F_ B(x),\quad \text{for } x\in (x_b,1].
\end{cases}
\end{equation}

We have already noticed that both saturation coefficients $s_{A}$ and $s_{B}$ are greater than 1. For the sake of generality, we make the weaker assumption: 
\begin{equation}\label{H3}
s_{A}s_{B} > 1. 
\end{equation}

Of course, in these notations, the parabolic system reads:
\begin{equation}\label{eq:Neurodevelopment}
\begin{cases}
 \partial_t A-\varepsilon d_A\Delta A = A\big(F_A-A-s_{A}B\big),  \qquad 0 < x < 1, \; t \geq 0,
 \\[5pt]
 \partial_t B-\varepsilon d_B\Delta B = B\big(F_B-B-s_{B}A\big),
\end{cases}
\end{equation}
with the Robin boundary conditions~\eqref{eq:BoundaryConditions}. If~\eqref{GH2}--\eqref{GH1c} are met for these $H_A$ and $H_B$, then Theorem~\ref{theo1} allow us to say that starting with monotonic initial conditions, then solution to~\eqref{eq:Neurodevelopment} defines a unique point $x^*$ as a boundary between the two functional areas considered, disambiguating the boundary location. 

To start with,  note that assumption~\eqref{GH2} is valid thanks to \eqref{H1} and that they fit the  
interpretation for neurodevelopment. They are trivially checked in our case since the maps $H_A(x,A,B)$ and $H_B(x,A,B)$ are linear. We are therefore left to characterizing the equilibria of the system and their stability.

\begin{lemme} \label{lm:equi}
	The properties~\eqref{GH1a}--\eqref{GH1c} are valid for our model (see Fig.~\ref{fig:secondCase}). In details, under assumptions~\eqref{H1},~\eqref{H2} and~\eqref{H3} and in the absence of diffusion, we have 
	\renewcommand{\theenumi}{\roman{enumi}}
	\begin{enumerate}
		\item $(F_ A(x),0)$ is a stable fixed point for $x \in [0,x_a)$,
		\item $(0,F_ B(x))$ is a stable fixed point for $x \in (x_b,1]$,
		\item and there exists an additional solution, which is saddle, in $(x_a,x_b)$.
	\end{enumerate}
\end{lemme}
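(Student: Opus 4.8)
The plan is to verify assumptions~\eqref{GH1a}--\eqref{GH1c} by direct computation, exploiting that here $H_A(x,A,B)=F_A(x)-A-s_AB$ and $H_B(x,A,B)=F_B(x)-B-s_BA$ are affine in $(A,B)$, so every equilibrium and its linearization is available in closed form. Substitution gives $H_A(x,F_A(x),0)\equiv0$ and $H_B(x,0,F_B(x))\equiv0$, so~\eqref{GH1a} holds with the (renamed) morphogen profiles playing the role of the two fixed-point branches.

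For stability I would linearize the reaction dynamics $\partial_t(A,B)=(AH_A,BH_B)$. At $(F_A(x),0)$ the Jacobian is upper triangular with eigenvalues $-F_A(x)$ and $F_B(x)-s_BF_A(x)$; at $(0,F_B(x))$ it is lower triangular with eigenvalues $-F_B(x)$ and $F_A(x)-s_AF_B(x)$. Since $F_A,F_B>\delta>0$, one eigenvalue is always negative and the other is governed by the two functions $g(x):=F_B(x)-s_BF_A(x)$ and $h(x):=F_A(x)-s_AF_B(x)$. By~\eqref{H1}, $g'=F_B'-s_BF_A'>0$ and $h'=F_A'-s_AF_B'<0$, while~\eqref{H2} gives $g(x_a)=0$ and $h(x_b)=0$; hence $g<0$ on $[0,x_a)$ and $h<0$ on $(x_b,1]$, proving (i) and (ii). The same monotone crossings yield~\eqref{GH1b}, namely $g>0$ for $x>x_a$ and $h>0$ for $x<x_b$.

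For the interior equilibrium I would solve the linear system $A^*+s_AB^*=F_A(x)$, $B^*+s_BA^*=F_B(x)$, whose determinant $1-s_As_B$ is nonzero---indeed negative---by~\eqref{H3}. Cramer's rule gives the unique solution $A^*(x)=h(x)/(1-s_As_B)$ and $B^*(x)=g(x)/(1-s_As_B)$. As the denominator is negative, positivity of $A^*$ and $B^*$ is equivalent to $h(x)<0$ and $g(x)<0$, i.e.\ to $x>x_b$ and $x<x_a$; thus the interior point is positive precisely on $(x_b,x_a)$, establishing existence and uniqueness there. The saddle property of~\eqref{GH1c} is then immediate: $\partial_AH_A=\partial_BH_B=-1<0$, and $\partial_AH_A\,\partial_BH_B-\partial_BH_A\,\partial_AH_B=1-s_As_B<0$, so multiplying by $A^*B^*>0$ makes the full Jacobian determinant negative and forces eigenvalues of opposite sign.

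I expect no genuine obstacle here: the affine structure collapses the whole statement to the sign analysis of the two monotone functions $g$ and $h$, together with the single inequality $s_As_B>1$. The only points requiring care are bookkeeping---matching the renamed $F_A,F_B$ to the abstract branches of~\eqref{GH1a}, and checking that the positivity window for the interior equilibrium coincides exactly with $(x_b,x_a)$, which is where the crossing conditions~\eqref{H2} pin down the endpoints. I would also read the stated saddle interval as $(x_b,x_a)$, consistent with $x_b<x_a$ and with~\eqref{GH1c}.
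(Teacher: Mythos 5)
Your proposal is correct and follows essentially the same route as the paper: linearize at the two trivial equilibria (the same triangular Jacobians with eigenvalues $-F_A$, $F_B-s_BF_A$ and $-F_B$, $F_A-s_AF_B$), determine the stability windows from the monotonicity of $F_A,F_B$ together with the crossing conditions~\eqref{H2} (your sign analysis of $g,h$ is exactly the paper's inequality~\eqref{eq:rmq1}), and obtain the interior equilibrium explicitly with the saddle property from the negative Jacobian determinant forced by $s_As_B>1$. Your reading of the saddle interval as $(x_b,x_a)$ is also the correct interpretation of the statement's typo, and your explicit check of~\eqref{GH1b} is a small but welcome addition the paper leaves implicit.
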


\begin{proof}
	First two fixed points are trivial solutions, and their stability is obtained by investigating the eigenvalues of the Jacobian matrix at these points
	\renewcommand{\theenumi}{\roman{enumi}}
	
	\begin{enumerate}
	\item At $\big(F_A(x),0\big)$, the Jacobian matrix reads
	 \begin{equation}\nonumber
	 \begin{bmatrix}
	 -F_A(x) & -s_{A}F_A(x)\\0 & F_B(x)-s_{B}F_A(x)
	 \end{bmatrix},
	 \end{equation}
	and~\eqref{eq:rmq1} ensures us that this point is stable only on the region $[0,x_a)$.
	   \item The pair $\big(0,F_ B(x)\big)$ which is analogous to the previous point 
	 and stable on $(x_b,1]$.
	\item  Because of hypothesis~\eqref{H3}, there is an extra fixed point $(A^*,B^*)$ given by
	 \begin{equation}\nonumber
	  A^*=\frac{ s_{A} F_ B - F_ A}{s_{A}s_{B}-1}, \qquad \text{ } \qquad 
	  B^*=\frac{ s_{B} F_ A - F_ B}{s_{A}s_{B}-1}.
	 \end{equation}
	 From~\eqref{eq:rmq1} and~\eqref{H3},  we get that $(A^*,B^*)$ is admissible (i.e. both coordinates are non-negative) only in the region $[x_b,x_a]$. Monotonicity properties are trivial from the explicit expression, and the stability is governed by the eigenvalues of the Jacobian matrix
	 \begin{equation}\nonumber
	 \text{Jac}(A^*,B^*)=-\begin{bmatrix}
	  A^*& s_{A}A^* \\
	  s_{B}B^* & B^*
	 \end{bmatrix},
	 \end{equation}
	 which has negative determinant (as a consequence of assumption~\eqref{H3}). Therefore, its eigenvalues are real with opposite signs, i.e. the point $(A^*,B^*)$ is a saddle fixed point, completing the proof. 
	 \end{enumerate}	
\end{proof}
\begin{remark}
	Let us eventually notice the following fact explaining the topology of the phase plane for $x\in(x_b,x_a)$. The space $\R_+\times \R_+$ is partitioned into the attraction basin of $(F_A(x),0)$ and that of $(0,F_B(x))$, in addition to lower-dimensional invariant manifolds. The attraction basins of the fixed point are separated by the one-dimensional stable manifold of the saddle fixed point $(A^*(x),B^*(x))$, which is an invariance manifold serving as a separatrix between those trajectories converging to $\big(F_ A,0\big)$ and $\big(0, F_B\big)$.
\end{remark}
\begin{figure}
\begin{minipage}{0.45\textwidth}
 \subfigure[The morphogen gradients and steady states ]{\includegraphics[width=\textwidth]{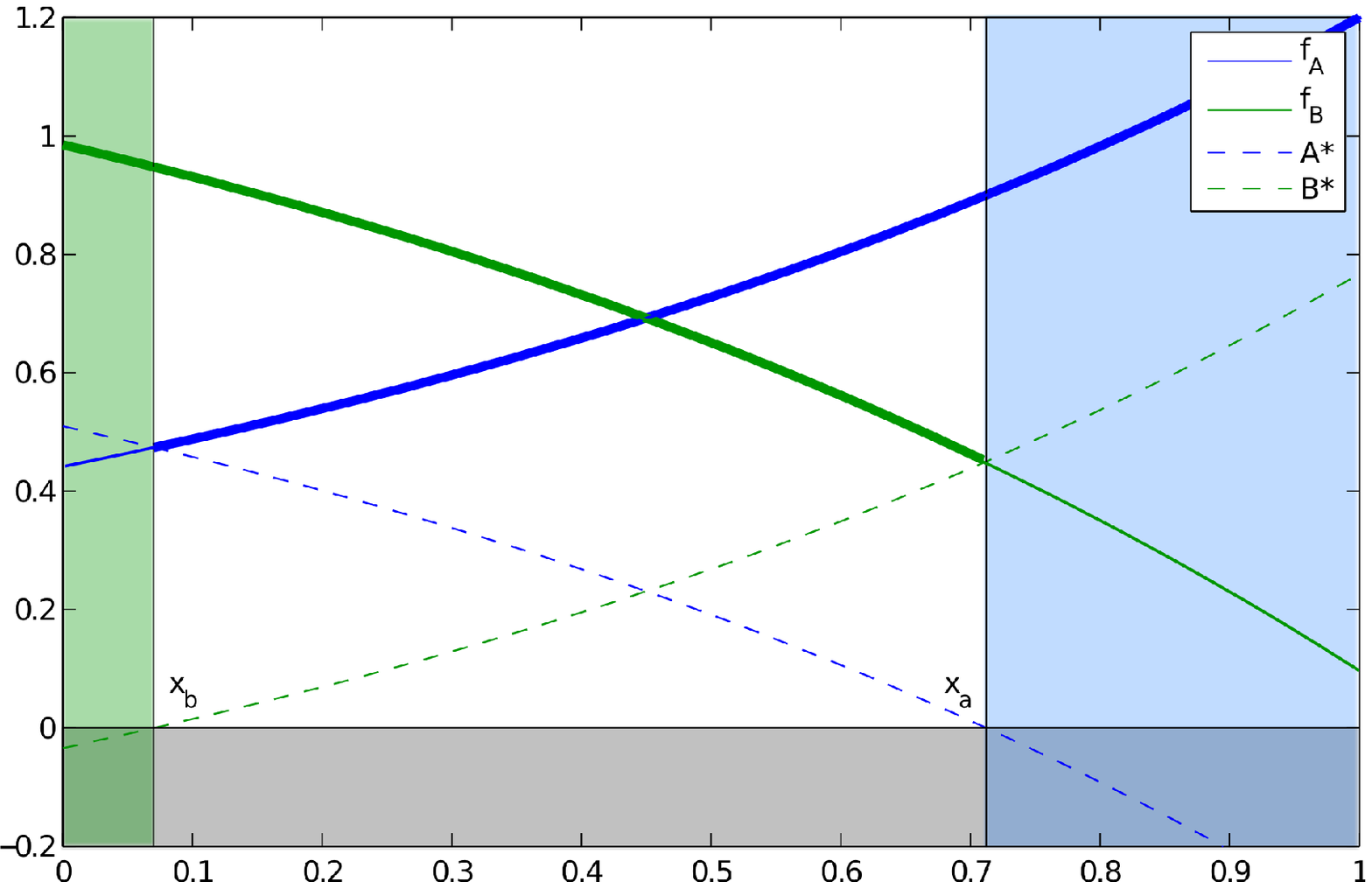}}
 \end{minipage}
 \begin{minipage}{0.25\textwidth}
  \subfigure{\includegraphics[width=\textwidth]{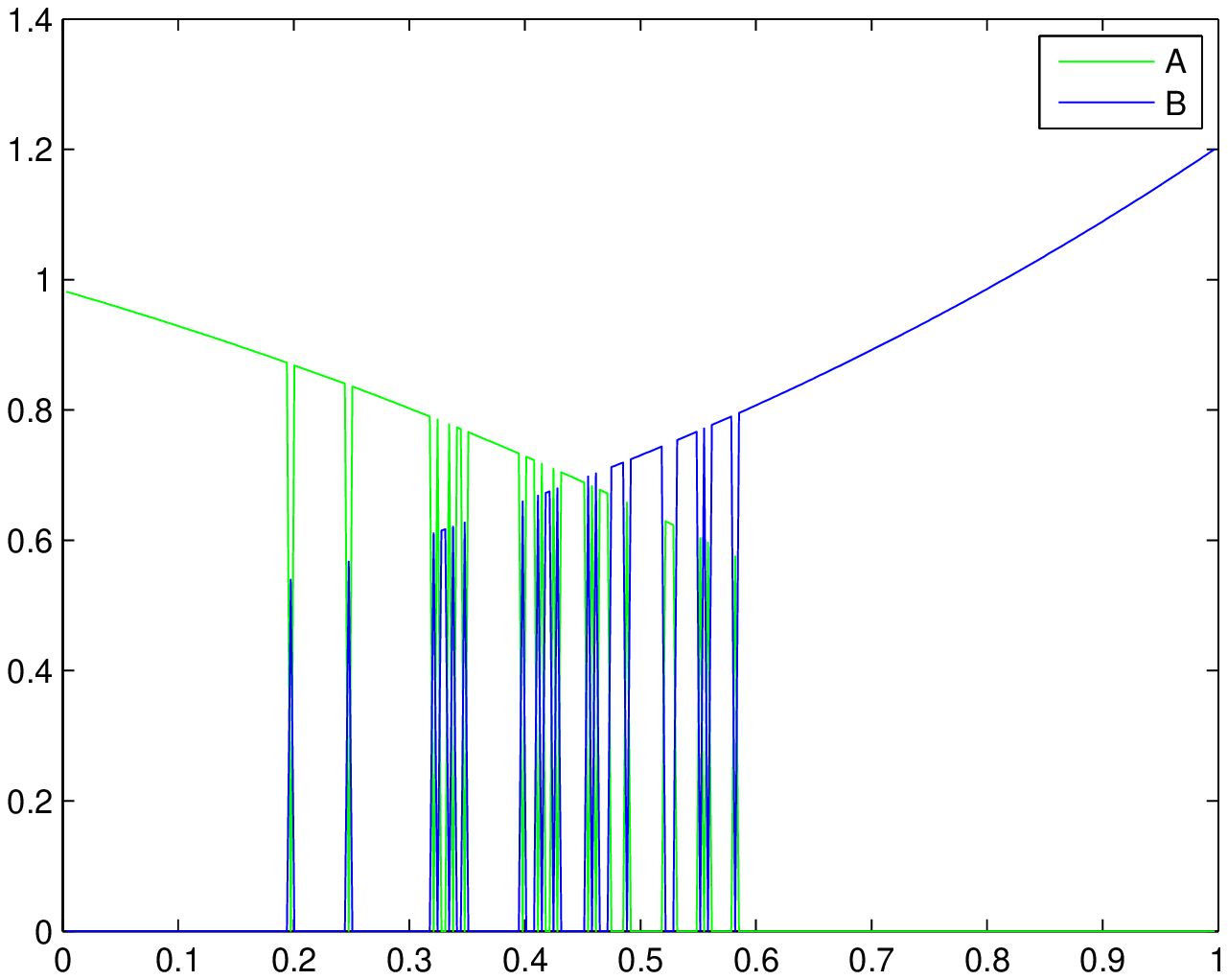}}
\\
 \subfigure{\includegraphics[width=\textwidth]{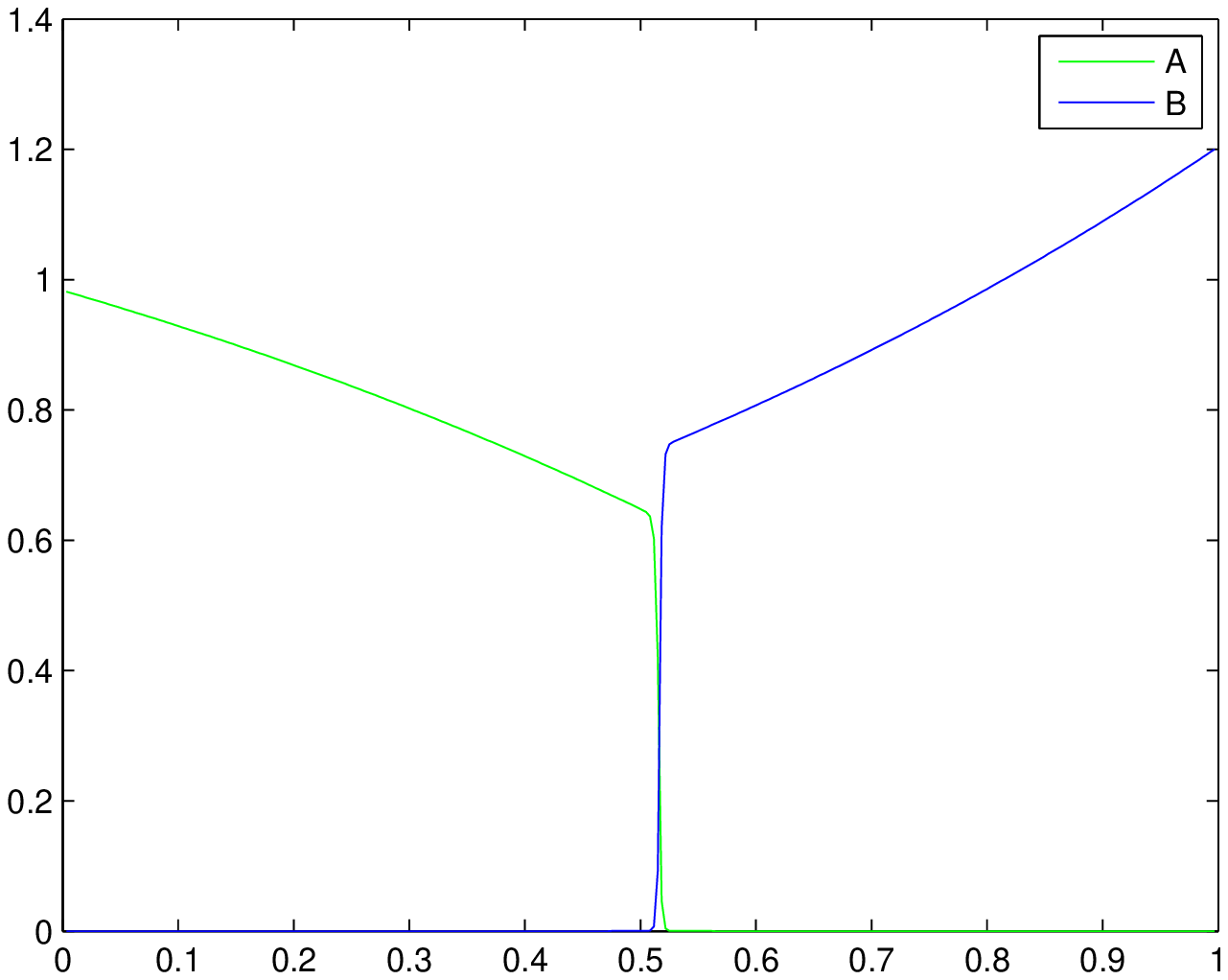}}
 \end{minipage}
 \vspace{-.5cm}
\caption{{Morphogenesis model with exponential morphogen gradients $F_A(x)$ and $F_B(x)$ (not shown) and $s_{A}=s_{B}=2$. (Left) Equilibria of the system in the absence of diffusion together with their stability (thick solid line: stable, thin solid line: repulsive, dashed: saddle). (Right) Numerical simulations of stationary states of the system~\eqref{eq:Neurodevelopment}-\eqref{eq:BoundaryConditions} shows (top) the ambiguity of boundary location for $\varepsilon=0$ and (bottom) the disambiguation for small diffusion $\varepsilon=10^{-6}$.}}
 \label{fig:secondCase}
\end{figure}

By a direct application of Theorem~\ref{theo1}, the system has a unique differentiated solution in the limit of small diffusion. But when considering only cell-autonomous mechanisms, the bistable region $x\in(x_b,x_a)$ induces an indeterminacy in the differentiation between two domains: cells may choose independently to differentiate into type $A$ or type $B$, yielding irregular and non-reproducible boundaries depending on the initial condition. This phenomenon is  illustrated in Figure~\ref{fig:secondCase}, right panel: in the absence of diffusion, the region within the interval $(x_b,x_a)$ has an unpredictable behavior that depends on space, while in the presence of even a very small diffusion, ambiguity disappears and a unique steady state emerges (see Figure~\ref{fig:secondCase}). In that sense, a small diffusion suffices to stabilize the transition. From an evolutionary viewpoint, endowing developmental transcription factors with non diffusion properties is a simple mechanism ensuring dramatic stabilization and robustness of the differentiation process. These numerical simulations further open some new perspectives. Indeed, we observe that the convergence towards the monotonic differentiated solutions seem to occur even when we relax the initial condition monotonicity hypothesis of Theorem~\ref{theo1}. Moreover, with random initial conditions, we numerically observe that for small times, $A$ converges rapidly to $F_A$ in $[0,x_b)$ and $B$ to $F_B$ in $(x_a,1]$, before the appearance of two abutting traveling fronts that develop toward the center of the coexistence zone, whose speed decreases as the solution converge. Proving that the theorem persists for general initial conditions remains an open problem.

\bibliographystyle{plain}

\end{document}